\newcommand{\End}{\mbox{End}}
\newcommand{\Hom}{\mbox{Hom}}
\newcommand{\Fac}{\mbox{Fac}}
\newcommand{\T}{\mathcal{T}}
\renewcommand{\mod}{\mbox{mod}}
\renewcommand{\top}{\mbox{top}}
\newcommand{\soc}{\mbox{soc}}
\newtheorem{theorem}{Theorem}[section]
\newtheorem{corollary}[theorem]{Corollary}
\newtheorem{lemma}[theorem]{Lemma}
\newtheorem{proposition}[theorem]{Proposition}
\newtheorem{conjecture}[]{Conjecture}
\theoremstyle{definition}
\theoremstyle{remark}
\newtheorem{remark}[theorem]{Remark}
\newtheorem{example}[theorem]{Example}
\begin{document}
\normalem

\title{On band modules and $\tau$-tilting finiteness}

\author{Sibylle Schroll}

\address{School of Mathematics and Actuarial Sciences, University of Leicester, Leicester, LE1  7RH}
\email{schroll@leicester.ac.uk and  yvd1@leicester.ac.uk}
\address{Hausdorff Center of Mathematics, Rheinische Friedrich-Wilhelms-Universit\"at Bonn, Bonn, Germany}
\email{hipolito.treffinger@hcm.uni-bonn.de}
\thanks{The first and the second author are supported by the EPSRC through the Early Career Fellowship, EP/P016294/1. The first and the third author are supported by the Royal Society through the Newton International Fellowship NIF\textbackslash R1\textbackslash 180959.
The second author is also funded by the Deutsche Forschungsgemeinschaft (DFG, German Research Foundation) under 
Germany's Excellence - EXC-2047/1-390685813.
}

\author{Hipolito Treffinger}

\author{Yadira Valdivieso}

\subjclass[2010]{16G20, 16S90, 05E15, 16W20, 16D90, 16P10}




\keywords{Band modules, $\tau$-tilting theory, bricks, special biserial algebras, Brauer graph algebras}

\begin{abstract}
In this paper, motivated by a $\tau$-tilting version of the Brauer-Thrall Conjectures, we study general properties of band modules and their endomorphisms in the module category of a finite dimensional algebra. 
As an application we describe properties of torsion classes containing band modules.
Furthermore, we show that a special biserial algebra is $\tau$-tilting finite if and only if no band module is a brick. We also recover a criterion for the $\tau$-tilting finiteness of Brauer graph algebras in terms of the Brauer graph.
\end{abstract}

\maketitle

\section{Introduction}

An important breakthrough in the representation theory of finite-dimensional algebras was the systematic introduction of quivers, a powerful tool bringing linear algebra into the theory. 
In particular, every finite-dimensional algebra over an algebraically closed field $K$ is Morita equivalent to a quotient $KQ/I$ of a path algebra $KQ$ of a quiver $Q$ by an admissible ideal $I$ \cite{Gabriel1972}. 
Presentations of algebras in terms of quivers with relations encode much  homological and geometric information.
As a consequence, quiver representations now play an important role in many areas of mathematics such as, for example, algebraic geometry, mathematical physics, and mirror symmetry.

Since the introduction of quivers to representation theory, much work  has been done to show that many families of algebras defined by homological properties can, in fact, be characterised by properties of their quivers and relations. 

One important such family of algebras is that of special biserial algebras. 
This class contains many well-known families of algebras, such as gentle algebras which play a central role in cluster theory \cite{Assem2010} and homological mirror symmetry of surfaces \cite{HKK} and Brauer graph algebras which originate in the modular representation theory of finite groups \cite{Dade1966}.
For these algebras much of the representation theory is encoded in the combinatorics of their quivers and relations. 
For example,  the isomorphism classes of finitely generated  indecomposable modules are given by certain words in the alphabet consisting of the arrows and formal inverses of arrows of their quivers \cite{BR87, WW85}.
This naturally divides the indecomposable modules over these algebras into two classes, the so-called \emph{string modules} and infinite families of \textit{band  modules}. 
The morphisms between string and band modules can also be described in terms of word combinatorics in the quiver \cite{CB89,Kra91}.   

One of the motivating observations of this paper is that for any finite-dimensional algebra given by quiver and relations, that is algebras which are not necessarily special biserial, we can still consider the combinatorics of string and band modules. 
We show that in the general case, these modules still encode significant information on the module categories of the algebras.
Even though, in this case there are (possibly infinitely many) indecomposable modules that cannot be described in terms of string and band modules.

More recently, the theory of cluster algebras has given new impetus to representation theory with the introduction of many new cluster-inspired representation theoretic concepts. 
An excellent example of this is the introduction of $\tau$-tilting theory, inspired by  mutation in cluster algebras\;\cite{AIR}. 
Since its introduction in 2014, $\tau$-tilting theory has been intensively studied and led to the introduction of many new concepts promising to relate representation theory with other areas of mathematics such as Hall algebras, Donladson-Thomas invariants  and Riemannian geometry.

From a representation theoretic point of view, one of the important results in \cite{AIR} are explicit bijections between functorially finite torsion classes, support $\tau$-tilting modules and $2$-term silting complexes in the bounded derived category of a finite dimensional algebra. 
Accordingly an algebra is called \emph{$\tau$-tilting finite} if has finitely many support $\tau$-tilting modules. 
Furthermore, for a $\tau$-tilting finite algebra, there are only finitely many torsion classes and all  are functorially finite\;\cite{DIJ19}.
A further characterisation of $\tau$-tilting finite algebras is via \emph{bricks} in their module category. 
An object in the module category of an algebra is called a \textit{brick} if its endomorphism algebra is a division ring.
By \cite{DIJ19}, an algebra is $\tau$-tilting  finite if and only if there are finitely many bricks in its module category.
This immediately implies that if the module category of an algebra contains a band module which is a brick then the algebra is $\tau$-tilting infinite (see Proposition~\ref{prop:bandbrick}). 

The representation theory of a $\tau$-tilting finite algebra is considerably easier to understand than that of a $\tau$-tilting infinite algebra.  
For example, the support of the scattering diagram of a $\tau$-tilting finite algebra  is completely determined by its support $\tau$-tilting modules \cite{Bridgeland2017, BST2019} 
and  the stability manifold of a finite dimensional algebra $A$ is contractible if the algebra is silting-discrete which implies, in particular, that the heart of any bounded t-structure of the derived category of $A$  is a module category over a $\tau$-tilting finite algebra \cite{PSZ18}.

Therefore, a classification of $\tau$-tilting finite algebras is almost as important in today's representation theory as the determination of representation finite algebras was in the last century.

Much of the motivation in the early days of representation theory of finite dimensional algebras stems from this quest of determining algebras of finite representation type with an important role being played by the first and second Brauer-Thrall Conjectures, which were proved subsequently by Ro\u{\i}ter \cite{Roiter1968} and Auslander \cite{Auslander1974} for the first and by Nazarova and Ro\u{\i}ter \cite{Nazarova1971} and Bautista \cite{Bautista1985} for second.

We now state a $\tau$-tilting analogue of the first and second Brauer-Thrall Conjectures.
\begin{conjecture}[First $\tau$-Brauer-Thrall Conjecture]\label{conj:tBT1}
Let $A$ be a finite dimensional algebra over a field $K$. 
If there exists a positive integer $n$ such that $\dim_K M \leq n$ for every finite dimensional $A$-module which is a brick, then $A$ is $\tau$-tilting finite.
\end{conjecture}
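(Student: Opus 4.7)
The plan is to argue the contrapositive: assume that $A$ is $\tau$-tilting infinite and exhibit bricks of arbitrarily large dimension. The entry point is the Demonet--Iyama--Jasso criterion quoted in the introduction: $\tau$-tilting infiniteness is equivalent to having infinitely many isomorphism classes of bricks. Combined with the boundedness hypothesis, the pigeonhole principle provides an infinite family of pairwise non-isomorphic bricks of some fixed dimension $d\le n$.

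The next step is geometric. One places these bricks inside the affine variety $\mathrm{mod}(A,d)$ of $d$-dimensional $A$-module structures, equipped with its natural $GL_d$-action whose orbits correspond to isomorphism classes. Since $\dim_K \End(M)$ is upper semicontinuous in $M$, the brick locus is a $GL_d$-stable open subvariety of $\mathrm{mod}(A,d)$; containing infinitely many orbits, it must have an irreducible component of dimension strictly greater than the dimension of a generic orbit, so it contains an algebraic curve transverse to the orbits. This yields an algebraic one-parameter family of pairwise non-isomorphic $d$-dimensional bricks.

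The third step is to interpret this one-parameter family as a band module in the sense developed in the paper. The motivating picture is the special biserial case, where such families are precisely the families $M(w,\lambda,1)$ with $\lambda\in K^\ast$ running through a punctured affine line and the generic member is encoded combinatorially by the cyclic word $w$. For a general finite dimensional algebra $A=KQ/I$, the hope is that the geometric family produced in the previous step can likewise be pulled back to a combinatorial band in the sense of this paper, so that one can invoke the structural results the paper develops on band modules and their endomorphism algebras.

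The main obstacle is the final step: converting the existence of such a family into a dimension blow-up. By Proposition~\ref{prop:bandbrick}, the presence of a band module that is a brick already certifies $\tau$-tilting infiniteness, but this information by itself does not force bricks of large dimension. The crux is to show that any positive-dimensional locus of bricks produces bricks of unbounded dimension, either by an extension-theoretic construction building new bricks from iterated self-extensions inside the family, or by exhibiting a sequence of band-like bricks whose dimension grows with the length of the band word. Since band modules of higher weight typically fail to be bricks, this last step cannot proceed by naively raising the multiplicity parameter, and is precisely why the statement is recorded here as a conjecture rather than a theorem.
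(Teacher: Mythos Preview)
The paper does not prove this statement in general; it is recorded as a conjecture, with a reference to \cite{ST2020} for a proof over arbitrary fields. The only case the paper itself settles is that of special biserial algebras, in the corollary following Theorem~\ref{thm:specialbis}. That argument is quite different from your sketch: starting from a $\tau$-tilting infinite special biserial algebra, Theorem~\ref{thm:specialbis} yields a band $b$ with $M(b,\lambda,1)$ a brick; one then checks, via the Crawley-Boevey description of morphisms, that the \emph{string} modules $M(b^n)$ are bricks for all $n\ge 1$, producing bricks of unbounded dimension. No module-variety geometry enters.

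Your proposal is not a proof but an outline that you yourself flag as incomplete, and two of its steps are genuinely problematic. First, the passage from an algebraic one-parameter family of $d$-dimensional bricks in $\mathrm{mod}(A,d)$ to a band module in the combinatorial sense of Section~\ref{sec:background} is unjustified for a general algebra $A=KQ/I$: bands are specific words in $Q_1\cup\overline{Q_1}$, and there is no mechanism in the paper (or elsewhere) that converts an arbitrary irreducible curve of bricks into one of the families $\{M(w,\lambda,1)\}_{\lambda\in K^*}$. Indeed, already for tame algebras that are not special biserial the generic one-parameter families are typically not band modules in this sense. Second, as you note, even if one had a band $b$ with $M(b,\lambda,1)$ a brick, raising the Jordan-block parameter $n$ destroys the brick property, so Proposition~\ref{prop:bandbrick} alone gives no bricks of large dimension. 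The paper's special biserial workaround---replacing $M(b,\lambda,n)$ by the string module $M(b^n)$---depends essentially on the Butler--Ringel/Crawley-Boevey classification of indecomposables and morphisms, and has no analogue for general $A$.
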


Conjecture~\ref{conj:tBT1} immediately implies the first Brauer-Thrall Conjecture in the case of a $\tau$-tilting infinite algebra. We note that this conjecture has also been stated in \cite[Conjecture 6.6]{Mou19} and it has been shown to hold for any finite dimensional algebra over any field in \cite{ST2020}.

\begin{conjecture}[Second $\tau$-Brauer-Thrall Conjecture]\label{conj:tBT2}
Let $A$ be a finite dimensional algebra over a field $K$. 
Suppose that $A$ is $\tau$-tilting infinite. 
Then there is a positive integer $d$ such that there are infinitely many bricks $M$ where $\dim_K M = d$.
\end{conjecture}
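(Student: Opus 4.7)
The plan is to reduce to the case of special biserial algebras, where the paper's forthcoming characterisation---that a special biserial algebra is $\tau$-tilting finite if and only if no band module is a brick---can be leveraged, and then to exploit the natural $K^*$-parametrisation of band modules to produce infinitely many bricks in a single dimension. Assume $A$ is a special biserial, $\tau$-tilting infinite algebra. By the characterisation, some band module $M(b, \lambda_0, n_0)$ is a brick. A short argument, reducing large $n$ to $n = 1$ via the filtration of $M(b, \lambda_0, n_0)$ by copies of $M(b, \lambda_0, 1)$ together with Proposition~\ref{prop:bandbrick}, should allow us to take $n_0 = 1$.

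The crucial step is then to show that the brick property for $M(b, \lambda, 1)$ is \emph{generic} in $\lambda \in K^*$: if it holds for one $\lambda_0$, it fails for at most finitely many $\lambda$. To prove this I would use the combinatorial description of $\Hom(M(b, \lambda, 1), M(b, \mu, 1))$ via graph maps due to Crawley-Boevey and Krause: any non-scalar endomorphism of $M(b, \lambda, 1)$ factors through a small list of graph maps determined by $b$, and the compatibility with the $\lambda$-twist of the cyclic identification produces a polynomial equation in $\lambda$. Consequently the locus of $\lambda \in K^*$ for which $M(b, \lambda, 1)$ fails to be a brick is a finite union of vanishing sets of non-zero polynomials. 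Since the conjecture is vacuous over finite fields, one may assume $K$ is infinite, and this yields infinitely many bricks of dimension equal to the length of the band $b$.

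The main obstacle has two layers. Inside the special biserial setting, the delicate point is that graph maps producing non-scalar endomorphisms may exist only for specific values of $\lambda$ (for instance when $b$ has a periodic substructure or an order $n$ rotational symmetry), so genericity has to be extracted from a careful case analysis, keeping track of which combinatorial graph maps impose which polynomial conditions on $\lambda$. Beyond special biserial algebras the conjecture is essentially wide open: one no longer has a canonical supply of one-parameter families of indecomposables of fixed dimension, and even the strict wildness of an algebra does not automatically provide bricks rather than merely indecomposables. A serious attack on the general case would likely need a $\tau$-tilting refinement of the brick-Brauer-Thrall machinery of \cite{ST2020}, enhanced to detect infinite families of bricks concentrated in a single dimension.
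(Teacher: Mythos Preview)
Your reduction to the special biserial case and the invocation of Theorem~\ref{thm:specialbis} match the paper's argument exactly. However, your ``crucial step'' is unnecessarily complicated. You propose to show that the brick property for $M(b,\lambda,1)$ is \emph{generic} in $\lambda$, failing only on a finite vanishing locus cut out by polynomial conditions. The paper instead uses the much stronger fact---already implicit in Proposition~\ref{prop:bandbrick} and Remark~\ref{rmk:n=1}---that the brick property is completely \emph{independent} of $\lambda$: a basis of $\End_A(M(b,\lambda,1))$ consists of the identity together with the maps $f_w$ indexed by finite strings $w$ that are simultaneously submodule and quotient substrings of $^\infty b^\infty$, and this combinatorial description makes no reference to $\lambda$ whatsoever. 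Hence if $M(b,\lambda_0,1)$ is a brick for a single $\lambda_0$, then $M(b,\mu,1)$ is a brick for every $\mu\in K^*$, and since $K$ is algebraically closed (hence infinite) the conjecture follows at once.

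Your worry about ``periodic substructure or rotational symmetry'' producing special values of $\lambda$ is therefore misplaced for $n=1$: the graph maps $f_w$ exist and are non-zero regardless of $\lambda$. Such $\lambda$-dependent phenomena can arise when comparing $M(b,\lambda,1)$ with $M(b,\mu,1)$ for $\lambda\neq\mu$, but not for endomorphisms of a single $M(b,\lambda,1)$. The reduction from general $n_0$ to $n_0=1$ is also simpler than you suggest: for $n>1$ the filtration of $M(b,\lambda,n)$ by copies of $M(b,\lambda,1)$ already supplies a non-zero nilpotent endomorphism, so any band module that is a brick necessarily has $n=1$.
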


We note that the second $\tau$-Brauer-Thrall Conjecture is not a direct translation of the second Brauer-Thrall Conjecture. 
A direct translation of the latter to a $\tau$-tilting version would state that if an algebra is $\tau$-tilting infinite then there exist infinitely many integer vectors $d$ such that there exist infinitely many isomorphism classes of bricks of dimension vector $d$. 
Clearly this does already not hold in the case of the Kronecker algebra. 
However, even though the formulation of Conjecture \ref{conj:tBT2} is not a direct generalisation of the classical second Brauer-Thrall Conjecture, if Conjecture \ref{conj:tBT2} holds for a $\tau$-tilting infinite algebra $A$ then it follows from \cite{Smalo1980} that the classical second Brauer-Thrall Conjecture holds for $A$.

Note that the choice of the name `$\tau$-Brauer-Thrall Conjectures' is guided by the fact that the conjectures  are concerned with the $\tau$-tilting finiteness of the algebras. 

We show by exploiting the interplay between the combinatorics of bricks and band modules that both the first and the second $\tau$-Brauer-Thrall Conjectures hold for special biserial algebras over an algebraically closed field.

More precisely, we show the following.

\begin{theorem}[Theorem \ref{thm:specialbis}]
Let $A=KQ/I$ be a special biserial algebra over an algebraically closed field $K$.
Then $A$ is $\tau$-tilting finite if and only if no  band module of $A$ is a brick.
\end{theorem}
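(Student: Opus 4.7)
The plan is to prove the two implications of the equivalence separately, with the forward direction being essentially immediate from tools already assembled in the paper, and the backward direction being the substantive content.

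For the forward direction (if $A$ is $\tau$-tilting finite, then no band module is a brick), I would argue the contrapositive and appeal to Proposition~\ref{prop:bandbrick}: if some band module $M(b,1,\lambda_0)$ is a brick, then varying the parameter $\lambda \in K^{*}$ over the infinite field $K$ yields an infinite family of pairwise non-isomorphic bricks $M(b,1,\lambda)$, so $\mod A$ has infinitely many bricks and hence $A$ is $\tau$-tilting infinite by the brick characterisation of \cite{DIJ19}.

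For the backward direction, I would argue the contrapositive: assuming $A$ is $\tau$-tilting infinite, exhibit a band brick. By \cite{DIJ19} there are infinitely many bricks in $\mod A$, and by the Butler--Ringel / Wald--Waschb\"usch classification \cite{BR87,WW85} every indecomposable $A$-module is either a string module $M(w)$ or a band module $M(b,n,\lambda)$. Band modules with $n \ge 2$ admit obvious non-scalar endomorphisms coming from the Jordan-block structure of the parameter, so if any band brick exists we are done; otherwise every brick is a string module. Because $Q$ has finitely many arrows, only finitely many strings have length at most $\ell$, so there must exist string bricks of arbitrarily large length.

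The combinatorial heart of the argument is then a pigeonhole step. Given a string brick $M(w)$ with $w$ very long compared to the combinatorial data of $(Q,I)$, the sequence of local data along $w$ (vertex visited together with the type of the incoming/outgoing letter and its orientation) must repeat, producing a substring $b$ of $w$ that closes up into a band of $A$. The main obstacle, and where I would concentrate the technical effort, is to upgrade this existence of a band $b$ to the statement that $M(b,1,\lambda)$ is itself a brick for generic $\lambda \in K^{*}$. The natural approach is to analyse $\End_A(M(w))$ using Crawley-Boevey's factor/substring description of morphisms between string modules \cite{CB89,Kra91}, together with the endomorphism computations for band modules developed earlier in the paper: one wants to show that any non-scalar endomorphism of $M(b,1,\lambda)$ would lift, via the embedding of the band pattern into the long string $w$, to a non-scalar endomorphism of $M(w)$, contradicting the brick hypothesis on $M(w)$. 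Making this lift precise, and choosing $\lambda$ to avoid a finite exceptional set coming from the scalar-eigenvalue analysis, is where the argument genuinely uses the structural band/brick results the paper develops, and is the step most likely to require a careful case analysis on how $b$ sits inside $w$.
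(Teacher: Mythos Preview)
Your plan is correct and matches the paper's approach almost exactly: both directions are argued the same way, and your ``lift a band endomorphism to a string endomorphism'' step is precisely what the paper does, using Theorem~\ref{thm:lengthgraphmap} and Proposition~\ref{prop:onlyonce} to pin down that the offending substring $w'$ sits entirely inside the single copy of the band embedded in $w$.

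Two small corrections. First, the remark about ``choosing $\lambda$ to avoid a finite exceptional set'' is unnecessary and slightly misleading: the endomorphism algebra of $M(b,\lambda,1)$ is independent of $\lambda$ (the basis described in Section~\ref{sec:background} does not see the parameter when $n=1$), so $M(b,\lambda,1)$ is a brick for one $\lambda$ if and only if it is for all. Second, your pigeonhole step should be sharpened to the specific form of Lemma~\ref{lem:bandsspecbiser}: one repeats not just a vertex-with-orientation but a full two-letter pattern $\alpha\overline{\beta}$, yielding a band $b=\overline{\beta}v\alpha$ in which the letter $\alpha$ occurs \emph{exactly once}. This non-repetition is what allows Proposition~\ref{prop:onlyonce} to force the endomorphism word $w'$ to avoid $\alpha$ entirely, so that both its submodule and quotient occurrences live inside $\overline{\beta}v\subset w$ and the lift goes through without any further case analysis.
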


\begin{corollary}
Conjectures \ref{conj:tBT1} and \ref{conj:tBT2} hold for special biserial algebras.
\end{corollary}

In order to prove Theorem 1.1 we heavily rely on the following property of endomorphisms of band modules.
For notation see Section~\ref{sec:background}.

\begin{theorem}[Theorem~\ref{thm:lengthgraphmap}]
Let $A$ be a finite dimensional algebra over an algebraically closed field $K$ and let $b$ be a band.
Suppose that there exists a non-trivial nilpotent endomorphism $f_w : M(b,\lambda, 1) \to M(b, \lambda, 1)$  induced by a subword $w$ of $^\infty b ^\infty$.
Then $w$ is a proper subword of some rotation of $b$.
\end{theorem}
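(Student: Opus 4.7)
The plan is to prove the contrapositive: assuming $w$ is a subword of $^\infty b^\infty$ of length $|w| \geq |b|$, I show that every endomorphism $f_w : M(b,\lambda,1) \to M(b,\lambda,1)$ induced by $w$ is either zero or non-nilpotent, so that no such $w$ can produce a non-trivial nilpotent endomorphism.

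The starting point is a combinatorial observation: any subword of $^\infty b^\infty$ of length at least $|b|$ must contain a complete rotation of $b$ as a sub-subword. In particular, writing $k := \lfloor |w|/|b| \rfloor \geq 1$, the shift by $|w|$ along $^\infty b^\infty$ that underlies the definition of the graph map $f_w$ crosses at least $k$ complete periods of the band, and we may decompose $w = u \cdot r \cdot v$ where $r$ is a rotation of $b$ and $u,v$ are (possibly empty) subwords of $^\infty b^\infty$.

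The heart of the argument is to combine this with the defining property of $M(b,\lambda,1)$: the monodromy corresponding to one full traversal of $b$ acts as multiplication by the non-zero scalar $\lambda$ on the distinguished basis of $M(b,\lambda,1)$ indexed by one fundamental domain of $^\infty b^\infty$. Together with the description of graph maps induced by subwords as transporting basis vectors by a fixed shift (accumulating a factor of $\lambda$ each time a full period is crossed), this yields a factorisation of the form $f_w = \lambda^k \cdot \sigma$, where $\sigma$ is either the identity (if $|b|$ divides $|w|$) or a cyclic permutation of the distinguished basis by $|w| \bmod |b|$ positions. In either case $\lambda^k \cdot \sigma$ is a non-zero scalar multiple of a permutation matrix, hence invertible; so $f_w$ cannot be nilpotent, contradicting the hypothesis.

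The chief obstacle is to justify the factorisation $f_w = \lambda^k \cdot \sigma$ at the level of generality of the theorem, where $A$ is an arbitrary finite-dimensional algebra rather than a special biserial one. For band modules over special biserial algebras the endomorphism ring of $M(b,\lambda,1)$ is already $K$, so the statement is classical; in the present setting one must extend the graph-map calculations of Crawley-Boevey and Krause carefully, tracking how the combinatorial construction of $f_w$ from a subword interacts with the relations of $A$ to confirm that the shift-and-scale description persists. Once this factorisation is secured, the conclusion is immediate from $\lambda \in K \setminus \{0\}$.
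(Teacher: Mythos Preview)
Your proposal rests on a misreading of how the basis map $f_w$ is built. The datum defining $f_w$ is not a single occurrence of $w$ together with a ``shift by $|w|$'': it is a \emph{pair} of occurrences of $w$ in $^\infty b^\infty$, one as a quotient substring (in the source) and one as a submodule substring (in the target). The map sends the basis vectors sitting over the first occurrence to those sitting over the second, and everything else to zero. Thus the relevant shift is the difference $p_2-p_1$ between the two positions, not $|w|$; your factorisation $f_w=\lambda^k\sigma$ with $\sigma$ a cyclic permutation by $|w|\bmod|b|$ places has no reason to hold. Indeed, when $|w|\geq|b|$ the word $w$ contains a full period of $^\infty b^\infty$, so \emph{any} occurrence of $w$ is at the same position modulo $|b|$; hence $p_1\equiv p_2\pmod{|b|}$ and, were the map well-defined, you would get $\sigma=\mathrm{Id}$, never a nontrivial permutation.

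What actually happens, and this is the paper's argument, is that no such $f_w$ exists at all once $|w|\geq|b|$. Because the two occurrences of $w$ lie at congruent positions, the letters immediately preceding and following $w$ are forced to be the same letters in both occurrences. But the quotient condition demands an inverse letter before and a direct letter after, while the submodule condition demands the opposite; these are incompatible. The proof is therefore a two-line combinatorial contradiction on words, with no module-theoretic or monodromy computation required, and it works verbatim for arbitrary $A$ --- your worry about extending Crawley--Boevey/Krause beyond the special biserial case is unnecessary.
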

 
As a consequence of Theorem 1.3, we show the following results on torsion classes based on the band modules they contain. 

\begin{theorem}[Theorem~\ref{thm:torsion}]
Let $K$ be an algebraically closed field and let $A$ be a finite dimensional $K$-algebra  containing a band module $M(b, \lambda, 1)$, for some $\lambda \in K^*$. 
\begin{enumerate}
    \item If $M(b, \lambda, 1) \in \T$ for some torsion class $\T$, then $M(b, \lambda, n) \in \T$, for all $n \in \mathbb{N}$.
    \item If $M(b, \lambda, 1)$ is not a brick  and $M(b, \lambda, 1)$ is in  some torsion class $\T$, then $M(b, \mu, 1) \in \T$ for all $\mu \in K^*$.
    \item If $M(b, \lambda, 1)$ is a brick then there exist infinitely many distinct torsion classes $\T_\mu$, for $\mu \in K^*$, such that $\T_\mu$ contains $M(b, \eta, 1)$, for $\eta \in K^*$, if and only if $\eta = \mu$.
\end{enumerate}
\end{theorem}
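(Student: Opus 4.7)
I treat the three assertions in turn.

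\emph{Part (1).} The Jordan block structure of band modules furnishes a natural short exact sequence
\begin{equation*}
0 \to M(b,\lambda,1) \to M(b,\lambda,n) \to M(b,\lambda,n-1) \to 0.
\end{equation*}
Inducting on $n$, the base case $n=1$ is immediate, and the inductive step follows from closure of the torsion class $\T$ under extensions: $M(b,\lambda,n-1) \in \T$ by induction and $M(b,\lambda,1) \in \T$ by hypothesis together force $M(b,\lambda,n) \in \T$.

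\emph{Part (2).} Since $M(b,\lambda,1)$ is not a brick, it admits a non-trivial nilpotent endomorphism, and by Theorem~\ref{thm:lengthgraphmap} we may take one of the form $f_w$ induced by a proper subword $w$ of some rotation of $b$. Let $N := \Im(f_w)$. The factorisation $f_w = \iota \circ \pi$ with $\pi : M(b,\lambda,1) \twoheadrightarrow N$ and $\iota : N \hookrightarrow M(b,\lambda,1)$ makes $N$ both a quotient and a submodule of $M(b,\lambda,1)$; writing $Q := M(b,\lambda,1)/N$, a combinatorial inspection shows that $N$ and $Q$ are string modules whose isomorphism classes depend only on $b$ and $w$, not on the scalar $\lambda$. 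Both $N$ and $Q$ are quotients of $M(b,\lambda,1) \in \T$ and hence lie in $\T$. For every $\mu \in K^*$, the same subword data realise the canonical short exact sequence
\begin{equation*}
0 \to N \to M(b,\mu,1) \to Q \to 0,
\end{equation*}
so extension closure of $\T$ yields $M(b,\mu,1) \in \T$. The principal obstacle is the verification that $N$ and $Q$ are genuinely $\lambda$-independent string modules; this requires a careful analysis of how the proper subword $w$ carves the band $b$, ensuring that the scalar parameter is absorbed neither into $N$ nor into $Q$.

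\emph{Part (3).} Define $\T_\mu$ to be the smallest torsion class containing $M(b,\mu,1)$. First observe that $M(b,\mu,1)$ is a brick for every $\mu \in K^*$: by the contrapositive of Theorem~\ref{thm:lengthgraphmap}, any nilpotent endomorphism would come from a proper subword $w$ of a rotation of $b$, and the existence of such a $w$ is a scalar-independent condition, so the brick hypothesis for $\lambda$ transfers to every $\mu$. To exclude $M(b,\eta,1)$ from $\T_\mu$ for $\eta \neq \mu$, exploit the action of the band-loop $\theta_b$ obtained by traversing a rotation of $b$: this operator acts on $M(b,\nu,1)$ as scalar multiplication by $\nu$ on the cyclic anchor, and acts nilpotently on every proper string-module quotient of $M(b,\nu,1)$. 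Using the description of $\T_\mu$ as the extension- and quotient-closure of $M(b,\mu,1)$, an induction on filtration length shows that on every object of $\T_\mu$ the operator $\theta_b$ has eigenvalues only in $\{\mu, 0\}$, while on $M(b,\eta,1)$ it has the eigenvalue $\eta \in K^* \setminus \{\mu\}$. Hence $M(b,\eta,1) \notin \T_\mu$, and pairwise distinctness of the $\T_\mu$ is then immediate from this containment property, yielding the required infinite family indexed by $K^*$.
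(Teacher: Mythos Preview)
Parts (1) and (2) are correct and match the paper's argument essentially verbatim: extension closure and induction for (1), and for (2) the use of Theorem~\ref{thm:lengthgraphmap} to exhibit a $\lambda$-independent string submodule $M(w)$ and string quotient of $M(b,\lambda,1)$, followed by extension closure.

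Part (3), however, has a genuine gap. Your ``band-loop operator $\theta_b$'' is not a well-defined natural operator on $\mod A$: a band $b$ is a word in both direct and inverse arrows, so traversing it does not correspond to multiplication by any element of $A$, nor to any natural endomorphism of the identity functor on $\mod A$ or on $\T_\mu$. There is therefore no meaning to the assertion that $\theta_b$ ``acts'' on an arbitrary object of $\T_\mu$, and the eigenvalue bookkeeping under filtrations has nothing to stand on. (Even on $M(b,\nu,1)$ itself, since it is a brick the only $A$-linear endomorphisms are the scalars, so no $A$-linear operator singles out the parameter $\nu$; the thing that records $\nu$ is the module structure, not an endomorphism.)

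The paper's argument bypasses this entirely via a Hom computation. From the description of a basis of morphisms between band modules (the discussion preceding Remark~\ref{rmk:n=1}), for $\eta\neq\mu$ the space $\Hom_A(M(b,\mu,1),M(b,\eta,1))$ is spanned only by maps $f_w$ with $w$ simultaneously a submodule and a quotient substring of $^\infty b^\infty$; the brick hypothesis on $M(b,\lambda,1)$ says precisely that no such $w$ exists, so this Hom vanishes. Setting $\T_\mu=\mathrm{Filt}(\Fac(M(b,\mu,1)))$, if $M(b,\eta,1)\in\T_\mu$ then the bottom step of a filtration gives a nonzero submodule lying in $\Fac(M(b,\mu,1))$, hence a nonzero map $M(b,\mu,1)\to M(b,\eta,1)$, contradicting the Hom vanishing. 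This Hom-vanishing step is the missing ingredient; it replaces your eigenvalue argument and requires no operator acting across the whole torsion class.
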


A different classification of $\tau$-tilting finite special biserial algebras has recently been obtained in \cite{Mousavand2019} based on the classification of minimal representation infinite special biserial algebras in \cite{Ringel2011}. 
Gentle algebras are a subclass of special biserial algebras. 
For these algebras a similar criterion to the one obtained in this paper was given in \cite{Plamondon2018}.
 
We finish the paper in Section~\ref{sec:BGA} with an application of our criterion to obtain a characterisation of $\tau$-tilting finite Brauer graph algebras, giving a new proof of \cite[Theorem 6.7]{AAC18}.

\medskip 

{\bf Acknowledgements:} 
For helpful conversations regarding band modules and their morhpisms, the authors would like to thank Rosanna Laking and Jan Schr\"oer. Particular thanks goes to the latter for suggesting to formulate our results in terms of a $\tau$-tilting version of the Brauer-Thrall Conjectures.  The authors also  thank Alexandra Zvonareva for her  comments, and the referee for pointing out a mistake in an earlier version of the paper. 

\section{Background}\label{sec:background}
In this section we fix  some  notation and definitions which will be used throughout the paper.

We fix an algebraically closed field $K$, and $A$ a  finite dimensional $K$-algebra, which is Morita equivalent to $KQ/I$ for some finite quiver $Q$ and admissible ideal $I$ in $KQ$ (see \cite{Gabriel1972}). 
We call elements of a generating set of $I$ \emph{relations}. 
Note that we use the same notation for elements in $KQ$ and elements in $KQ/I$ with the implicit understanding that the latter are representatives in their equivalence class.

Given an $A$-module $M$, we call the \textit{top} of $M$, denoted $\top M$, the largest semisimple quotient of $M$.
Similarly, we call the \textit{socle} of $M$, denoted $\soc M$, the largest semisimple submodule of $M$. 

For a quiver  $Q$, let $Q_0$ be the set of vertices of $Q$ and  $Q_1$ the set of arrows. 
If $\alpha$ is an arrow of $Q$, we denote by $s(\alpha)\in Q_0$ and by $t(\alpha) \in Q_0$ the source and target point of $\alpha$, respectively.

For every arrow $\alpha:i \to j$, we define $\bar{\alpha}:j\to i$ to be its formal inverse. 
Let $\overline{Q_1}$ be the set of formal inverses of the elements of $Q_1$. We refer to the elements of $Q_1$ as direct arrows and to the elements of $\overline{Q_1}$ as inverse arrows.
In general, by abuse of notation, we assume that $\overline{\overline{\alpha}} = \alpha$.
A \emph{walk} is a sequence $\alpha_1\dots \alpha_n$ of elements of $Q_1\cup \overline{Q_1}$ such that $t(\alpha_i)=s(\alpha_{i+1})$ for every $i=1, \dots, n-1$ and such that $\alpha_{i+1} \neq \overline{\alpha_i}$.
We say that a walk is of \textit{length} $t$ if it is the composition of exactly $t$ elements in $Q_1 \cup \overline{Q_1}$.

We recall that a \emph{string} in $A$ is by definition a walk $w$ in $Q$ such that no subpath $w'$ of $w$ or subpath $\overline{w'}$ of $\overline{w}$ is a summand in a relation in $I$. 
A \emph{band} $b$ is defined to be a cyclic string such that every power $b^n$ is a string, but $b$ itself is not a proper power of some string.
A string $w=\alpha_1\dots \alpha_n$ is a \emph{direct} (respectively, \emph{inverse}) if $\alpha_i$ is a direct arrow (respectively, inverse arrow) for every $i=1, \dots, n$.

Given a string $w$, the \emph{string module} $M(w)$ corresponds to the quiver representation induced by replacing each vertex in $w$ by a copy of the field $K$ and every arrow in $w$ by the identity map. 
In a similar way, given a band $b=\alpha_1\dots \alpha_t$, a non-zero element $\lambda$ in $K^*$ and $n\in\mathbb N$, the band module $M(b, \lambda, n)$ is obtained from the band $b$ by replacing each vertex by a copy of the $K$-vector space $K^n$ and every arrow $\alpha_i$ for $1\leq i < t$ by the identity matrix of dimension $n$, and $\alpha_t$ by a Jordan block of dimension $n$ and eigenvalue $\lambda$ (we refer \cite{BR87} for the precise definition).

\begin{remark}
If $A$ is not special biserial and $b$ is a band then the induced band module $M(b, \lambda, n)$ might not lie in a homogenous tube of rank $1$.
For example, this is the case for any band in the $3$-Kronecker algebra.
\end{remark}

Let $w = \alpha_1  \ldots \alpha_i \ldots \alpha_j \ldots \alpha_t$ be a 
string and let $u = \alpha_i \ldots \alpha_j$.
Then we say that $u$ is a \emph{submodule substring} if  $\alpha_{i-1}$ is direct and $\alpha_{j+1}$ is inverse, noting that if $i$ is equal to 1 or if $j$ is  equal to $t$, we still consider $u$ a submodule substring. 
We say that $u$ is a \emph{quotient substring} if $\alpha_{i-1}$ is inverse and $\alpha_{j+1}$ is direct, and noting again that if $i$ is equal to 1 or if $j$ is  equal to $t$, we still consider $u$ a quotient  substring. 

Given two strings $v, w$ such that they have a common substring $u$ which is a quotient substring of $v$ and a submodule substring of $w$, then by \cite{CB89} there is a non-zero map from $M(v)$ to $M(w)$ and the maps of this form give a basis of $\Hom_A(M(v), M(w))$. 

Maps between bands are slightly different from maps between strings \cite{Kra91}, see also, for example, \cite{LakingPhD}. 
For completeness we recall the construction of a basis morphism between bands. 
We extend the definition of strings to infinite strings, noting that the only  infinite strings  we will be considering are the strings  of the form $^\infty b^ \infty$  which are the infinite strings formed by infinitely many compositions of a band $b$ with itself.

Let $b$ and $c$ be two bands, $\lambda, \mu \in K^*$ and $n, m$ two positive integers. 
If $b$ is different from $c$ or $\lambda$ is different from $\mu$, a basis of $\Hom_A(M(b,\lambda, n), M(c,\mu, m))$ is given by maps $f_{(w,\phi)}$ induced by pairs $(w, \phi)$, where $w$ is a string of finite length which is a quotient substring of $^\infty b^ \infty$ and a submodule substring of $^\infty c^\infty$ and $\phi$ is an element of a given basis of $\Hom_K(K^n, K^m)$.

If $b=c$ and $\lambda=\mu$, a basis of $\Hom_A(M(b,\lambda, n), M(b,\lambda, m))$ is given by the maps induced by the pairs $(w, \phi)$ as above and  maps $f_\psi$  induced by $K$-linear maps $\psi\in \Hom_K(K^n, K^m)$ such that for every vertex $i$ in $Q$,  
$(f_\psi)_i\-:(M(b, \lambda, n))_i \to (M(b, \lambda, m))_i$ is equal to a diagonal by blocks $nk\times mk$-matrix, where the diagonal blocks correspond to the maps $\psi$ and where $k$ is the number of times the vertex $i$ is the start of a letter in $b$.
We note, in particular, that the maps $f_\psi$ do not correspond to substrings of $^\infty b ^\infty$ which are at the same time submodule strings and quotients strings.
In the case $m=n=1$, the only non-zero basis element of the form $f_\psi$ is the identity map.

\begin{example}\label{ex:morphism}
Let $A$ be the algebra given by the quiver
$
\begin{tikzcd}
\bullet \arrow[r, shift right, "\beta" below] \arrow[r, shift left, "\alpha" above] & \bullet \arrow[r, shift right, "\delta" below] \arrow[r, shift left, "\gamma" above] & \bullet
\end{tikzcd}
$
and the ideal $I= \langle \alpha\delta, \beta\gamma \rangle$.
Observe that  $b=\gamma\overline{\delta}$  and $c=\alpha\gamma\overline{\delta}\gamma\overline{\delta}\ \overline{\beta}$ are bands in $A$ and that $\gamma\overline{\delta}\gamma\overline{\delta}$ is a quotient string of $^\infty b^\infty$ and a submodule substring of $^\infty c^\infty$. 
Let $M(b, \lambda, n)$ and $M(c, \mu, m)$ be  two band modules associated to $b$ and $c$ respectively, with $\lambda, \mu \in K^*$ and $n,m\in \mathbb N$. 
Then, for any morphism $\phi \in \Hom_K(K^n, K^m)$,  the pair $(\gamma\overline{\delta}\gamma\overline{\delta}, \phi)$ gives rise to a basis element $f_{(\gamma\overline{\delta}\gamma\overline{\delta}, \phi)}  \in \Hom_A(M(b,\lambda, n), M(c,\mu, m))$ induced by the diagram in Figure~1 with the following notation:
$U=K^n$ and $V=K^m$ and $\Psi$ is the $(n \times n)$-Jordan block of  eigenvalue $\lambda$ and $\Phi$ is the $(m\times m)$-Jordan block of eigenvalue $\mu$.

\begin{figure}[ht!]
    \begin{tikzcd}
    \cdots \ar[dr] & & U \arrow[dl, "1_K" above, "\delta"] \arrow[dr, "\Psi" above, "\gamma" below] \arrow[ddd, dashed, "\phi"] & & U \arrow[dl, "1_K" above, "\delta"] \arrow[dr, "\Psi" above, "\gamma"below] \arrow[ddd, dashed, "\phi\Psi ^{-1}"] & & U \arrow[dl, "1_K" above, "\delta"] \arrow[dr, "\Psi" above, "\gamma"below] \arrow[ddd, dashed, "\phi\Psi ^{-2}"] & & \cdots \ar[dl]\\
     & U& & U \arrow[ddd, dashed, "\phi\Psi ^{-1}"] &  & U \arrow[ddd, dashed, "\phi\Psi ^{-2}"]  &  & U& \\
     & V\arrow[dr, "1_K" above, "\alpha" below] \ar[dl] & & &  & & & V \arrow[dl, "\Phi" above, "\beta"]\ar[dr]&  \\
  \cdots & & V \arrow[dr, "1_K" above, "\gamma" below] & & V \arrow[dl, "1_K" above, "\delta"] \arrow[dr, "1_K" above, "\gamma"below] & & V \arrow[dl, "1_K" above, "\delta"]  & & \cdots\\
    & & & V&  &V & & &
  \end{tikzcd}
  \caption{Diagram of $(\gamma\overline{\delta}\gamma\overline{\delta},\phi)$}
  \label{fig:graphmaps}
\end{figure}

Then the morphism $f_{(\gamma\overline{\delta}\gamma\overline{\delta},\phi)}: M(b, \lambda, n) \to M(c, \mu, m)$ is as follows.

\[
\begin{tikzcd}[column sep=2cm, ampersand replacement=\&]
0 \arrow[r, shift right, "0" below] \arrow[r, shift left, "0" above] \arrow[dd, "A"] \& U \arrow[r, shift right, "1_k" below] \arrow[r, shift left, "\Psi" above] \arrow[dd, "B"] \& U \arrow[dd, "C"]\\
\& \& \\
V \arrow[r, shift right, "\begin{bmatrix}0 \, \Phi\,  0\end{bmatrix}^{T}" below] \arrow[r, shift left, "\begin{bmatrix}0 \,  1_K\,  0 \end{bmatrix}^{T}" above] \& V^3 \arrow[r, shift right, "\begin{bmatrix}0  1  0\\  0  0  1\end{bmatrix}" below] \arrow[r, shift left, "\begin{bmatrix}1  0  0\\ 0  1  0 \end{bmatrix}" above] \& V^2
\end{tikzcd}
\]
where $A=\begin{bmatrix}0\end{bmatrix}$, $B=\begin{bmatrix}\phi\\ \phi\Psi^{-1}\\ \phi\Psi^{-2}\end{bmatrix}$, and $C=\begin{bmatrix} \phi\Psi^{-1}\\ \phi\Psi^{-2}\end{bmatrix}$.
\end{example}

\begin{remark}\label{rmk:n=1}
Let $b$ be a band in $A$. 
Then every non-zero non-trivial endomorphism of the band module $M(b,\lambda, 1)$ is a linear combination of maps that are determined by pairs $(w, Id_K)$, where $w$ is a string of finite length which is at the same time a quotient substring and a submodule substring of $^\infty b^ \infty$.
For ease of notation, we denote the map $f_{(w, Id_K)}$ by $f_w$.
\end{remark}

\bigskip
\section{Band modules and their endomorphisms}\label{sec:bands}

In this section, we begin by recalling   that if an algebra $A$ contains a band module which is a brick then $A$ is $\tau$-tilting infinite. This is a direct consequence of  \cite[Theorem 1.4]{DIJ19}.  

\begin{proposition}\label{prop:bandbrick}
Let $A= KQ/I$ be a finite dimensional algebra. 
If there exists a band module $M$ which is a brick, then $A$ is $\tau$-tilting infinite.
\end{proposition}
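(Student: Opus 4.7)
The plan is to exhibit, from the single brick $M(b,\lambda,1)$, an infinite family of pairwise non-isomorphic bricks, and then invoke \cite[Theorem 1.4]{DIJ19} which characterises $\tau$-tilting finiteness by the finiteness of the set of isomorphism classes of bricks. The natural candidate family is
\[
\{\,M(b,\mu,1)\mid \mu\in K^*\,\},
\]
obtained by varying the eigenvalue parameter of the band module.

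First I would show that every member of this family is a brick. By Remark~\ref{rmk:n=1}, any basis element of $\End_A(M(b,\mu,1))$ is either the identity or of the form $f_w$ where $w$ is a finite string that is simultaneously a quotient substring and a submodule substring of $^\infty b^\infty$. Crucially, the combinatorial data of which such strings $w$ exist depends only on the band $b$ and on the relations of $A$, and is independent of the chosen scalar $\mu$. Since $M(b,\lambda,1)$ is a brick, no such $w$ can exist (otherwise $f_w$ would be a non-trivial endomorphism of $M(b,\lambda,1)$ in addition to the identity). Hence $\End_A(M(b,\mu,1))=K$ for every $\mu\in K^*$, so each $M(b,\mu,1)$ is a brick.

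Next I would show pairwise non-isomorphism. For $\mu\neq\mu'$ in $K^*$, the description of the Hom-spaces in Section~\ref{sec:background} tells us that, since the two eigenvalues disagree, no summand of the form $f_\psi$ contributes, and $\Hom_A(M(b,\mu,1),M(b,\mu',1))$ has a basis given by the maps $f_{(w,\phi)}$ indexed by finite strings $w$ that are quotient substrings and submodule substrings of $^\infty b^\infty$, together with $\phi\in\Hom_K(K,K)$. By the previous step, there are no such $w$, so this Hom-space vanishes. In particular there is no isomorphism between $M(b,\mu,1)$ and $M(b,\mu',1)$.

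Finally, since $K$ is algebraically closed, $K^*$ is infinite, producing infinitely many isomorphism classes of bricks, and \cite[Theorem 1.4]{DIJ19} gives the conclusion that $A$ is $\tau$-tilting infinite. I do not expect any serious obstacle: the argument is essentially a bookkeeping exercise on the Hom-space basis recalled in Section~\ref{sec:background}, and the only point worth emphasising is that the combinatorial criterion for the existence of non-identity basis endomorphisms of $M(b,-,1)$ does not involve the scalar parameter, which is precisely what allows us to propagate the brick property from one $\mu$ to all of them and at the same time rule out isomorphisms between different members of the family.
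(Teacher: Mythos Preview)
Your proposal is correct and follows essentially the same approach as the paper: vary the eigenvalue parameter to obtain the infinite family $\{M(b,\mu,1)\mid \mu\in K^*\}$, observe that $\End_A(M(b,\mu,1))$ is independent of $\mu$ so all members are bricks, note they are pairwise non-isomorphic, and conclude via \cite{DIJ19}. Your argument is in fact more detailed than the paper's (which simply asserts $\End_A(M(b,\lambda,n))\cong\End_A(M(b,\lambda',n))$ without elaboration and takes non-isomorphism for granted), and your additional observation that the Hom-spaces between distinct parameters vanish is correct but stronger than needed.
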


\begin{proof}
Since $K$ is algebraically closed and hence infinite, for any band $b$ there is an infinite family $\{M(b,\lambda,n) : \lambda \in K^*, n \in \mathbb{N}\}$ of non-isomorphic indecomposable modules.

By hypothesis, there exists a band $b$, $\lambda \in K^*$, and $n \in \mathbb{N}$ such that $M(b,\lambda,n)$ is a brick.
Since $End_A (M(b,\lambda,n)) \cong End_A (M(b,\lambda',n)) \cong K$, for all $\lambda, \lambda' \in K^*$, we have that $M(b,\lambda',n)$ is a brick for all $\lambda' \in K^*$.
In particular, this implies that there is an infinite number of bricks in $\mod A$
and by \cite{DIJ19} $A$ is $\tau$-tilting infinite.
\end{proof}

Motivated by Proposition~\ref{prop:bandbrick}, we show some general results on endomorphisms of band modules. 

\begin{theorem}\label{thm:lengthgraphmap}
Let $A$ a finite dimensional algebra and let $b$ be a band.
Suppose that there exists a  non-trivial nilpotent endomorphism $f_w : M(b,\lambda, 1) \to M(b, \lambda, 1)$  induced by the subword $w$ of $^\infty b ^\infty$.
Then $w$ is a proper subword of some rotation of $b$.
\end{theorem}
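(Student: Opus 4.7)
The plan is to argue by contradiction: assuming $|w| \geq |b|$, I will show that no choice of locations in ${}^\infty b^\infty$ can make $w$ simultaneously a quotient substring and a submodule substring, which by Remark~\ref{rmk:n=1} contradicts the existence of the non-trivial basis endomorphism $f_w$.

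First I set up notation by writing $b = \alpha_1 \cdots \alpha_n$ with $n = |b|$, and view ${}^\infty b^\infty$ as the bi-infinite sequence $(\alpha_j)_{j \in \mathbb{Z}}$ with $\alpha_{j+n} = \alpha_j$. By Remark~\ref{rmk:n=1} the endomorphism $f_w = f_{(w,\mathrm{Id}_K)}$ arises from an occurrence of $w$ as a quotient substring together with a (possibly distinct) occurrence of $w$ as a submodule substring in ${}^\infty b^\infty$; I write these as $w = \alpha_{k+1}\cdots \alpha_{k+\ell}$ and $w = \alpha_{k'+1}\cdots \alpha_{k'+\ell}$ respectively, where $\ell = |w|$. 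The quotient condition then forces $\alpha_k$ to be inverse and $\alpha_{k+\ell+1}$ direct, while the submodule condition forces $\alpha_{k'}$ direct and $\alpha_{k'+\ell+1}$ inverse.

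Now I assume for contradiction $\ell \geq n$. Equality of the two length-$\ell$ windows gives $\alpha_{k+i} = \alpha_{k'+i}$ for $i = 1,\dots,\ell$; because $\ell \geq n$ this equality covers a full period's worth of consecutive positions, so combined with the fundamental periodicity $\alpha_{j+n}=\alpha_j$ it forces $d := k' - k$ to be a period of the entire bi-infinite sequence. Since the periods of a bi-infinite sequence form a subgroup of $\mathbb{Z}$ generated by the minimal positive period, and since $b$ being a band means $b$ is not a proper power of any shorter string, the minimal period of ${}^\infty b^\infty$ is exactly $n$. Hence $n \mid d$, so $k \equiv k' \pmod n$ and therefore $\alpha_k = \alpha_{k'}$, contradicting the requirement that $\alpha_k$ is inverse while $\alpha_{k'}$ is direct.

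This forces $\ell = |w| < n = |b|$, so $w = \alpha_{k+1}\cdots \alpha_{k+\ell}$ is a strict initial segment of the rotation $\alpha_{k+1}\cdots \alpha_{k+n}$ of $b$, which is precisely a proper subword of some rotation of $b$. The main obstacle I foresee is articulating the Fine--Wilf type periodicity step cleanly: the argument must make explicit that the non-proper-power hypothesis on $b$ is exactly what forces the minimal period of ${}^\infty b^\infty$ to equal $|b|$, so that any integer period is a multiple of $|b|$. Note that the nilpotency hypothesis is not directly used in the combinatorial argument; it only ensures, via the local nature of $\End M(b,\lambda,1)$, that $f_w$ is not the identity and hence belongs to the basis indexed by genuine substrings $w$ to which Remark~\ref{rmk:n=1} applies.
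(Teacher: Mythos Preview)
Your proof is correct and follows essentially the same strategy as the paper: assume $|w|\ge |b|$, use the $|b|$-periodicity of ${}^\infty b^\infty$ together with the two occurrences of $w$ (one as quotient substring, one as submodule substring) to force the flanking letters $\alpha_k$ and $\alpha_{k'}$ to coincide, contradicting that one is direct and the other inverse. The only difference is in how the contradiction is extracted: you pass through a Fine--Wilf style step (showing $d=k'-k$ is a period of the whole bi-infinite word and then invoking the non-proper-power hypothesis on $b$ to get $n\mid d$), whereas the paper writes $w=w'w''w'$ with $w'w''=(b')^n$ and observes directly that the last letter of $w''$, being the $n|b|$-th letter of $w$, must equal the letter immediately preceding $w$ in \emph{each} occurrence; this shorter route never actually needs the assumption that $b$ is not a proper power.
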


\begin{proof}
Since $f_w$ is a non-trivial nilpotent endomorphism of $M(b,\lambda, 1)$, we have that $w$ is both a quotient substring and a submodule substring of $^\infty b ^\infty$.
This is equivalent to the existence of letters $\alpha, \overline{\beta}, \overline{\gamma}, {\delta} \in Q_1 \cup \overline{Q}_1$ such that $\alpha w \overline{\beta}$ and $\overline{\gamma} w \delta$ are subwords of $^\infty b ^\infty$.

Suppose the existence of a positive integer $n$ such that $w$ can be written as $w=(b')^n w'$ for some rotation $b'$ of $b$, where the length of $w'$ is strictly smaller than the length of $b$.
Since $w'$ is a subword of $b$, there exists a rotation $b''$ of $b$ such that $w=w'(b'')^n$. 
Hence $w=w' w'' w'$ for some subword $w''$ of $^\infty b ^\infty$ and we have $w'w''=(b')^n$ and $w''w'=(b'')^n$.

From the fact that $\alpha w \overline{\beta} = \alpha w' w'' w' \overline{\beta} = \alpha b^n w' \overline{\beta}$ is a subword of $^\infty b ^\infty$ we can conclude that the last letter of $w''$ is $\alpha$.
But, at the same time, from $\overline{\gamma} w \delta = \overline{\gamma} w' w'' w' \delta = \overline{\gamma} b^n w' \delta$ we can conclude that the last letter of $w''$ is $\overline{\gamma}$, a contradiction.
Thus the length of $w$ cannot be greater than the length of $b$.
\end{proof}

\begin{proposition}\label{prop:onlyonce}
Let $b$ be a band and suppose that there exists a  non-trivial nilpotent endomorphism $f_w : M(b,\lambda, 1) \to M(b, \lambda, 1)$  induced by the subword $w$ of $^\infty b ^\infty$.
If $\alpha$ is a letter of $w$ then $\alpha$ appears at least twice in $b$.
\end{proposition}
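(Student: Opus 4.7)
The strategy is to argue by contradiction using the periodicity of $^\infty b ^\infty$.

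Setting $n = |b|$, the first step is to invoke Theorem \ref{thm:lengthgraphmap}, which forces the length of $w$ to be strictly less than $n$. Assuming for contradiction that some letter $\alpha$ of $w$ appears only once in $b$, the letter $\alpha$ occurs exactly once per period of $^\infty b ^\infty$, and therefore at most once in any subword of length smaller than $n$. Hence $\alpha$ occurs in $w$ precisely once, at some fixed position $i$.

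Next, since $f_w$ is non-trivial and nilpotent, the description of basis morphisms recalled in Remark \ref{rmk:n=1} tells us that $w$ must be simultaneously a quotient substring and a submodule substring of $^\infty b ^\infty$. Unpacking the definitions, this yields two occurrences of $w$ in $^\infty b ^\infty$ whose immediate left and right neighbours have \emph{opposite} direct/inverse patterns (one has inverse on the left and direct on the right, the other has direct on the left and inverse on the right). The key observation is that both occurrences of $w$ must place the unique letter $\alpha$ at a position of $^\infty b ^\infty$ lying in the arithmetic progression of common difference $n$ formed by the positions of $\alpha$. Consequently, the two occurrences of $w$ are related by a shift that is a multiple of the period $n$. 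Since $^\infty b ^\infty$ is invariant under shifts by $n$, both occurrences must have identical immediate surroundings, contradicting the required difference in direct/inverse patterns.

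The main obstacle I anticipate is purely bookkeeping: carefully tracking positions modulo $n$ in the bi-infinite string and spelling out the equivalence \emph{same position modulo $n$ in $^\infty b ^\infty$ if and only if identical immediate surroundings}. Once this translation is made explicit, the content of the proof reduces to a short pigeonhole observation built on top of Theorem \ref{thm:lengthgraphmap}.
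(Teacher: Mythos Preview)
Your proposal is correct and uses essentially the same idea as the paper: both arguments pivot on Theorem~\ref{thm:lengthgraphmap} together with the fact that the quotient and submodule occurrences of $w$ in $^\infty b^\infty$ have incompatible boundary letters, combined with the $|b|$-periodicity of $^\infty b^\infty$. The paper phrases this directly, computing the nonzero shift $0<s<|b|$ between the two occurrences and concluding $\alpha_i=\alpha_{s+i \pmod{|b|}}$, while you take the contrapositive route; the two are repackagings of one another.
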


\begin{proof}
Suppose that $b=\alpha_1\alpha_2 \dots \alpha_t$.
By Theorem~\ref{thm:lengthgraphmap} and without loss of generality we can assume that $b = wu$, where the first letter of $u$ is direct and the last letter of $u$ is inverse, for some subword $u$ of $b$ of length at least one and that this occurrence of $w$ in $b$ is a quotient substring of $^\infty b ^\infty$.
Since $b$ starts with $w$, there exists a $r < t$ such that $w=\alpha_1 \alpha_2 \dots \alpha_r$.

On the other hand, $w$ is also a submodule string of $b$ and by Theorem~\ref{thm:lengthgraphmap} there exist strings  $v$ and $v'$ such that $b^2 = vwv'$, where the last letter of $v$ is direct and the first letter of  $v'$ is inverse. 
We choose $v$ to be the shortest possible, which implies that the length $s$ of $v$ is strictly less than $t$. 
Furthermore, $s \geq 1$ since the last letter of $v$ is direct, while the last letter of $u$ is inverse.

Now $w=\alpha_{s+1}\alpha_{s+2}\dots\alpha_{s+r}$.
Hence $\alpha_i=\alpha_{s+i}$ for all $1\leq i \leq r$.
If $s+r < t$, the result follows at once.
Otherwise, $\alpha_i=\alpha_{s+i}$ if $s+i \leq t$ and $\alpha_i=\alpha_{s+i-t}$ if $s+i>t$.
In particular, since $0<s<t$, $\alpha_i$ and $\alpha_{s+i-t}$ are distinct letters of $b=\alpha_1\alpha_2\dots\alpha_t$.
\end{proof}

\begin{proposition}\label{prop:sirectsummand} 
Let $w$ be a string in $A$ containing a substring $\alpha_1 \ldots \alpha_k \alpha_1$ such that $b=\alpha_1 \ldots \alpha_k$ is a band. 
Then  $\emph{top} (M(b,\lambda, 1))$ is a direct summand of  $\emph{top} (M(w))$, and  $\emph{soc} (M(b,\lambda, 1))$ is a direct summand of  $\emph{soc} (M(w))$, for every $\lambda\in K^*$.
\end{proposition}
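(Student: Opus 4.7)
The plan is to compare the tops and socles of $M(b,\lambda,1)$ and $M(w)$ by locating suitable peaks and valleys in the walks in $Q$ attached to the band $b$ and the string $w$.

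First I recall the standard combinatorial description of these modules. For a string $u=\gamma_1\cdots\gamma_m$ with associated walk $v_0,v_1,\dots,v_m$, the module $\top M(u)$ has a basis of simples $S_{v_i}$ indexed by the positions $i$ which are \emph{peaks}: either $i=0$ and $\gamma_1$ is direct, or $i=m$ and $\gamma_m$ is inverse, or $0<i<m$ with $\gamma_i$ inverse and $\gamma_{i+1}$ direct. The description of $\soc M(u)$ is obtained by interchanging the words ``direct'' and ``inverse''. For the band module $M(b,\lambda,1)$ the analogous walk is cyclic of length $k$, so every position is interior and $\top M(b,\lambda,1)=\bigoplus_{i} S_{v_i}$ summed over those $i\in\{1,\dots,k\}$ such that $\alpha_{i}$ is inverse and $\alpha_{i+1}$ is direct, with indices modulo $k$; the socle is analogous.

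Next I exploit the extra letter in the substring. Suppose $\alpha_1\cdots\alpha_k\alpha_1$ occupies positions $p+1,\dots,p+k+1$ of $w$, so that the walk of $w$ visits, at positions $p,p+1,\dots,p+k+1$, the vertices $v_0,v_1,\dots,v_k,v_1$ coming from the cyclic walk of $b$. I define a map from positions of $b$ to positions of $w$ by sending $i\in\{1,\dots,k\}$ to $p+i$ (where the cyclic position $0=k$ of $b$ is sent to $p+k$). Since $1\leq p+i\leq p+k<p+k+1\leq|w|$, every such position is strictly interior in $w$, so the peak/valley test there depends only on the arrows $\alpha_i$ and $\alpha_{i+1}$ (with $\alpha_{k+1}=\alpha_1$ supplied by the repeated letter). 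An arrow-by-arrow comparison then shows that the peak condition at $v_i$ in $b$ coincides with the peak condition at $v_{p+i}$ in $w$, and likewise for valleys.

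This injective, vertex-preserving map of peak positions yields an embedding $\top M(b,\lambda,1)\hookrightarrow \top M(w)$ of semisimple modules, which is automatically a direct summand; the same argument for valleys gives the statement for the socle. The only subtle point, and the reason the hypothesis requires the repeated letter $\alpha_1$, is that this repetition is exactly what ensures the ``gluing vertex'' $v_0=v_k$ of the band is read off inside $w$ as a truly interior position, so that no endpoint convention needs to be invoked.
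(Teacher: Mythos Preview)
Your proof is correct and follows essentially the same idea as the paper's: both identify the simple summands of $\top M(b,\lambda,1)$ and $\soc M(b,\lambda,1)$ with the peaks and valleys of the cyclic walk of $b$, and then observe that the extra copy of $\alpha_1$ in the substring $\alpha_1\cdots\alpha_k\alpha_1$ makes every such peak or valley an \emph{interior} peak or valley of $w$. The paper organises this via a decomposition $b=w_1\cdots w_{2t}$ into maximal direct and inverse pieces and then performs a short case analysis on whether $\alpha_1$ lies at a boundary between two pieces; your version is slightly cleaner in that the uniform position map $i\mapsto p+i$ avoids that case split, but the content is the same.
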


\begin{proof}
Up to cyclic permutation $b$ can be written as $b=w_1 w_2 \dots w_{2t-1} w_{2t}$ where $w_{2i-1}$ is a direct string and $w_{2i}$ is an inverse string for all $1 \leq i \leq t$, for some positive integer $t$.
Then $\soc (M(b,\lambda, 1))$ is the direct sum of the simple modules $S(t(w_{2i-1}))=S(s(w_{2i}))$ for all $1 \leq i \leq t$ and $\top (M(b,\lambda, 1))$ is the direct sum of the simple modules $S(t(w_{2i}))=S(s(w_{2i+1}))$ for all $0 \leq i \leq t-1$.

If $\alpha_1$ is not the first letter of one of the $w_i$, then the result follows. 
Now suppose that $\alpha_1$ is the first letter of  $w_{i}$ for some $0 \leq i \leq 2t$. Without loss of generality assume that $\alpha_1$ is the first letter of the direct string $w_1$. 
Then $\alpha_k$ is the last letter of the inverse string $w_{2t}$. 
It immediately follows that  $\soc (M(b,\lambda, 1))$ is a direct summand of $\soc (M(w))$. Finally, to prove that $\top (M(b,\lambda, 1))$ is a direct summand of $\top (M(w))$, it is enough to observe that $S(s(w_{1}))$  is a direct summand of $\top (M(w))$, corresponding to the substring $\alpha_k\alpha_1$.
\end{proof}

\begin{proposition}\label{prop:toptosoc}
Let $b$ be a band in $A$ with no repeated letters. 
Then for all $\lambda\in K^*$, every non-trivial nilpotent endomorphism $f\in \emph{End}_A(M(b,\lambda,1))$ induces a map 
$$\overline{f}: \emph{top}(M(b,\lambda,1)) \to \emph{soc}(M(b,\lambda,1)).$$
In particular, the image of every non-trivial nilpotent endomorphism $f$ is semisimple.
\end{proposition}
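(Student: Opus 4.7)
The plan is to combine the basis description of $\End_A(M(b,\lambda,1))$ from Remark~\ref{rmk:n=1} with the restrictions given by Theorem~\ref{thm:lengthgraphmap} and Proposition~\ref{prop:onlyonce} in order to force every graph map appearing in $f$ to come from a length-zero substring, which will then automatically factor through $\soc M(b,\lambda,1)$ from $\top M(b,\lambda,1)$.

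Since $f$ is nilpotent, Remark~\ref{rmk:n=1} allows me to decompose $f = \sum_w c_w f_w$, where the sum is over finite strings $w$ which are simultaneously a quotient substring and a submodule substring of $^\infty b^\infty$, with no identity summand (any identity contribution would be a unit in the local endomorphism ring $\End_A(M(b,\lambda,1))$ and therefore incompatible with nilpotency). For every $w$ with $c_w \neq 0$, the map $f_w$ is a non-isomorphism, hence nilpotent in the local ring; Theorem~\ref{thm:lengthgraphmap} then says that $w$ is a proper subword of some rotation of $b$, and Proposition~\ref{prop:onlyonce} says that every letter of $w$ appears at least twice in $b$. The no-repeated-letters hypothesis forces every $w$ in the decomposition of $f$ to have length zero, that is, each $w$ is a single vertex $i \in Q_0$.

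The remaining task is to identify $f_w$ explicitly when $w$ is a single vertex $i$. The quotient substring condition singles out one occurrence of $i$ in $b$ at a valley (inverse letter on the left, direct letter on the right), producing a simple summand $S(i)$ of $\soc M(b,\lambda,1)$; the submodule substring condition singles out another occurrence of $i$ at a peak, producing a summand $S(i)$ of $\top M(b,\lambda,1)$. Unpacking the Crawley-Boevey--Krause construction in this degenerate case, $f_w$ should equal the composite
\[
M(b,\lambda,1) \twoheadrightarrow \top M(b,\lambda,1) \twoheadrightarrow S(i) \hookrightarrow \soc M(b,\lambda,1) \hookrightarrow M(b,\lambda,1),
\]
where the middle isomorphism identifies the two copies of $S(i)$ at the peak and the valley. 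Summing over all $w$ appearing in the decomposition of $f$ then yields the required factorisation of $f$ through $\top M(b,\lambda,1)$ with image inside $\soc M(b,\lambda,1)$, which gives both $\overline{f}$ and the semisimplicity of $\Im f$.

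The main obstacle will be justifying the explicit description of $f_w$ in the boundary case $|w|=0$. While the picture is natural from Example~\ref{ex:morphism} (where the common substring $w$ links the top of the domain with the socle of the codomain), the length-zero case requires unpacking the definition of basis morphisms carefully, in particular to verify that the generator of $\top M(b,\lambda,1)$ associated to the peak occurrence of $i$ is sent to the generator of $\soc M(b,\lambda,1)$ associated to the valley occurrence, while everything else is killed. Once this identification is in place, the remainder of the argument is formal.
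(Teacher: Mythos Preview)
Your argument is correct and follows essentially the same route as the paper's proof: decompose $f$ via Remark~\ref{rmk:n=1}, use Proposition~\ref{prop:onlyonce} together with the no-repeated-letters hypothesis to force each $w$ to be a single vertex, and observe that the resulting $f_w$ factors through a simple summand common to $\top$ and $\soc$. The paper is terser and does not separately invoke Theorem~\ref{thm:lengthgraphmap} (it is already baked into Proposition~\ref{prop:onlyonce}), nor does it dwell on the length-zero verification you flag as an obstacle; your more explicit unpacking of that case is a reasonable addition but not strictly needed, since the quotient/submodule substring conditions at a vertex are exactly the conditions for $S(i)$ to sit in $\top$ and $\soc$ respectively.
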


\begin{proof}
Let $f \in \End_A(M(b,\lambda,1))$ be a non-zero endomorphism.
By Remark \ref{rmk:n=1} $f$ is given by a linear combination of morphisms of the form $f_w$ given by a submodule substring $w$ of $^\infty b^ \infty$ which is at the same time a quotient substring of $^\infty b^ \infty$. 
Since $b$ has no repeated letters, by Proposition \ref{prop:onlyonce} every $w$ corresponds to a vertex.
Therefore, every summand of $f$ is induced by a simple module which is a direct summand of both the top and socle of $M(b,\lambda,1)$. 
\end{proof}

\section{Bands and torsion classes}\label{sec:tors}
In this section we study torsion classes containing band modules. 
We show  that if a torsion class contains a band module which is not a brick then the torsion class contains all band modules in the same infinite family. Furthermore,  we show that if a band module $M(b, \lambda, 1)$ is a brick then, for any $\mu \in K^*$ with $\mu \neq \lambda$, the minimal torsion class containing $M(b, \lambda, 1)$ is distinct from the minimal torsion class containing $M(b, \mu, 1)$. More precisely, we show the following. 

\begin{theorem}\label{thm:torsion}
Let $A$ be a finite dimensional algebra containing a band module $M(b, \lambda, 1)$, for some $\lambda \in K^*$. 
\begin{enumerate}
    \item If $M(b, \lambda, 1) \in \T$ for some torsion class $\T$ then $M(b, \lambda, n) \in \T$, for all $n \in \mathbb{N}$.
    \item If $M(b, \lambda, 1)$ is not a brick  and $M(b, \lambda, 1)$ is in  some torsion class $\T$ then $M(b, \mu, 1) \in \T$ for all $\mu \in K^*$.
    \item If $M(b, \lambda, 1)$ is a brick then there exist  infinitely many  distinct torsion classes $\T_\mu$, for $\mu \in K^*$, such that $\T_\mu$ contains $M(b, \eta, 1)$, for $\eta \in K^*$, if and only if $\eta = \mu$.
\end{enumerate}
\end{theorem}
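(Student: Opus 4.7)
The plan is to handle the three parts in turn, using closure of torsion classes under quotients and extensions.

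For (1), the key ingredient is the short exact sequence
\[
0 \to M(b,\lambda,1) \to M(b,\lambda,n) \to M(b,\lambda,n-1) \to 0,
\]
coming from the Jordan block structure: at each vertex of the band the line spanned by the last standard basis vector is stable under every arrow action (with the twisted arrow restricting to multiplication by $\lambda$), yielding an embedded copy of $M(b,\lambda,1)$ whose quotient is $M(b,\lambda,n-1)$. Induction on $n$ together with closure of $\T$ under extensions finishes the argument.

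For (2), assume $M(b,\lambda,1)$ is not a brick. By Remark~\ref{rmk:n=1} together with Theorem~\ref{thm:lengthgraphmap}, there is a substring $w$ of $^\infty b^\infty$ that is simultaneously a quotient substring and a submodule substring, and is a proper substring of some rotation of $b$. The very same combinatorial data defines, for every $\mu \in K^*$, a non-zero morphism $g_w \colon M(b,\lambda,1) \to M(b,\mu,1)$ factoring as $M(b,\lambda,1) \twoheadrightarrow M(w) \hookrightarrow M(b,\mu,1)$; in particular its image is a quotient of $M(b,\lambda,1)$ and so lies in $\T$. The plan is then to deduce $M(b,\mu,1) \in \T$ from the short exact sequence
\[
0 \to M(w) \to M(b,\mu,1) \to M(b,\mu,1)/M(w) \to 0,
\]
provided the cokernel also lies in $\T$.

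The crux of (2), and the main obstacle, is to show that the cokernel is independent of $\mu$. My plan is as follows. Since $w$ is a proper substring of $b$, its complement in the cyclic word $b$ is a genuine (non-cyclic) string $u$, and the quotient $M(b,\mu,1)/M(w)$ is the representation supported on $u$ with the arrows crossing the boundary of $w$ acting as zero. Should the Jordan-block arrow lie in the interior of $u$, the finite length of $u$ allows the scalar $\mu$ to be absorbed by successive rescalings of the basis vectors of $u$, a move that fails for genuine cycles but works here because $u$ is not cyclic. In every case $M(b,\mu,1)/M(w) \cong M(b,\lambda,1)/M(w)$, which is visibly a quotient of $M(b,\lambda,1)$ and thus in $\T$. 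Extension closure then yields $M(b,\mu,1) \in \T$.

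For (3), let $\T_\mu$ denote the smallest torsion class containing $M(b,\mu,1)$ and $\F_\mu$ its associated torsion-free class. The decisive input is Hom-vanishing: when $M(b,\lambda,1)$ is a brick, so is every $M(b,\mu,1)$ (by the argument of Proposition~\ref{prop:bandbrick}), and Remark~\ref{rmk:n=1} then forbids any substring of $^\infty b^\infty$ from being both a quotient substring and a submodule substring, giving $\Hom_A(M(b,\mu,1), M(b,\eta,1)) = 0$ whenever $\mu \ne \eta$. Since vanishing of $\Hom_A(M(b,\mu,1), -)$ is preserved under direct sums, quotients, and extensions, it characterises $\F_\mu$, and so $M(b,\eta,1) \in \F_\mu$ whenever $\eta \ne \mu$. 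As $\T_\mu \cap \F_\mu = 0$, we obtain $M(b,\eta,1) \notin \T_\mu$, and the infinitude of $K^*$ yields the desired family of infinitely many distinct torsion classes.
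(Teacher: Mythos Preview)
Your proof is correct and follows essentially the same route as the paper's: part~(1) via the Jordan-block extension, part~(2) via the parameter-independent short exact sequence $0 \to M(w) \to M(b,\mu,1) \to N \to 0$ (your rescaling argument makes explicit what the paper asserts without detail, namely that $N$ is a string module and hence independent of the scalar), and part~(3) via $\T_\mu = \mathrm{Filt}(\mathrm{Fac}(M(b,\mu,1)))$ together with the Hom-vanishing between band modules with distinct parameters. The only cosmetic difference is that in~(3) you route the conclusion through the torsion-free class $\F_\mu$, whereas the paper argues directly that membership of $M(b,\eta,1)$ in $\T_\mu$ would produce a nonzero map $M(b,\mu,1)\to M(b,\eta,1)$; these are the same argument in dual dress.
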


\begin{proof}
Let $M(b, \lambda, 1)$ be a band module in a torsion class $\T$.

(1) This follows from the fact that $\T$ is closed under extensions and $M(b,\lambda,n)$ is an extension of $M(b,\lambda, 1)$ by $M(b,\lambda, n-1)$.

(2) Suppose $M(b, \lambda, 1)$ is not a brick.
By Theorem~\ref{thm:lengthgraphmap},  there exists  a string module $M(w)$ which is at the same time a quotient and a submodule of $M(b, \lambda, 1)$ giving rise to a short exact sequence 
$$0 \longrightarrow M(w) \longrightarrow M(b, \lambda, 1) \longrightarrow N \longrightarrow 0$$
where $N$ is the string module $M(b, \lambda, 1)/M(w)$.
Both $M(w)$ and $N$ are quotients of $M(b, \lambda, 1)$ and thus since $\T$ is closed under quotients, we have $M(w) \in \T$ and $N \in \T$.

Moreover, we have that the string modules $M(w)$ and $N$ are quotients of $M(b, \mu, 1)$, for any $\mu \in K^*$, since  the construction of string modules $M(w)$ and $N$ is independent of the parameter $\lambda$. 
Moreover, for every $\mu \in K^*$ we have a short exact sequence 
$$0 \longrightarrow M(w) \longrightarrow M(b, \mu, 1) \longrightarrow N \longrightarrow 0.$$ 
Then the result follows from the fact that $\T$ is closed under extensions. 

(3) Recall that given a band $b$ such that $M(b,\lambda, 1)$ is a brick for some $\lambda \in K^*$, then $M(b,\mu, 1)$ is a brick for all $\mu \in K^*$.
Moreover, we have that, for $\eta \in K^*$,  $\Hom_A(M(b, \eta, 1), M(b, \mu, 1)) =0$ if and only if $\mu \neq \eta$. 
Define $\T_\mu$ to be $\rm{Filt} (\Fac (M(b,\mu,1)))$, the class of all $A$-modules filtered by quotients of an element in $add (M(b,\mu,1))$.
We claim that, for $\eta \in K^*$, $M(b,\eta, 1)$ is not in $\T_\mu$ if $\mu \neq \eta$.
Suppose to the contrary that $M(b,\eta,1) \in \T_\mu$.
Then, $M(b, \eta, 1)$ is filtered by objects in $\Fac(M(b,\mu,1))$.
Hence there is a submodule $0 \neq L$ of $M(b,\eta,1)$ which is in $\Fac(M(b,\mu,1))$ and there is a non-zero map from $M(b,\mu,1)$ to $M(b,\eta,1)$ with $\mu \neq \eta$, a contradiction.
\end{proof}

\begin{remark}
The infinite family of torsion classes in Theorem~\ref{thm:torsion}(3) is such that two torsion classes in that family are not comparable in the poset of torsion classes of $A$. 
\end{remark}

\begin{remark} 
The results in Theorem \ref{thm:torsion}(1) and (2) serve as an indication of why  $\tau$-tilting finite algebras of infinite and of even wild representation type  exist. Examples of $\tau$-tilting finite wild algebras are  preprojective algebras of Dynkin type with at least six vertices \cite{Miz14} and wild contraction algebras \cite{Aug18}.
\end{remark}


\section{$\tau$-tilting finiteness for special biserial algebras}

In this section we apply the results of Section \ref{sec:bands} to show that the converse of Proposition~\ref{prop:bandbrick} holds for special biserial algebras. 
More precisely, we show the following. 

\begin{theorem}\label{thm:specialbis}
Let $A=KQ/I$ be a special biserial algebra.
Then $A$ is $\tau$-tilting finite if and only if no  band module of $A$ is a brick.
\end{theorem}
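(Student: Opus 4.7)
The only-if direction is immediate by taking the contrapositive of Proposition~\ref{prop:bandbrick}. For the converse, assume that no band module of $A$ is a brick and suppose toward contradiction that $A$ is $\tau$-tilting infinite. By \cite{DIJ19} there are then infinitely many bricks in $\mod A$. Since $A$ is special biserial, every indecomposable module is either a string module or a band module, and no band module is a brick by hypothesis, so there must be infinitely many non-isomorphic string bricks $M(w)$. In particular, since a finite quiver admits only finitely many walks of any fixed length, the underlying strings $w$ of these bricks have unbounded length.

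The plan is to show that a string $w$ that is too long must produce a non-trivial nilpotent endomorphism of $M(w)$, contradicting the brick property. By a pigeonhole/König argument on the finite quiver $Q$ and the monomial/commutativity relations that define a special biserial ideal, any sufficiently long string $w$ must contain a subword of the form $\alpha_1 \ldots \alpha_k \alpha_1$ where $b=\alpha_1 \ldots \alpha_k$ is a band of $A$. By hypothesis $M(b,\lambda,1)$ is not a brick, so Theorem~\ref{thm:lengthgraphmap} provides a proper subword $w_b$ of some rotation of $b$ which is simultaneously a submodule substring and a quotient substring of $^\infty b^\infty$. Because $w$ contains $\alpha_1 \ldots \alpha_k \alpha_1$, the subword $w_b$ appears in at least two distinct positions of $w$, and Propositions~\ref{prop:onlyonce} and \ref{prop:sirectsummand} control the letters of $w$ adjacent to these occurrences.

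The core step is then combinatorial: one shows that among these two occurrences of $w_b$ inside $w$, the surrounding letters of $w$ realize one occurrence as a submodule substring of $w$ and the other as a quotient substring of $w$. This produces a non-trivial nilpotent endomorphism $f\colon M(w)\to M(w)$ built exactly as in Section~\ref{sec:bands}, contradicting the assumption that $M(w)$ is a brick. Hence the length of a string brick is bounded, so the string bricks form a finite set, contradicting the infinitude of bricks. The main obstacle is producing the two occurrences of $w_b$ inside $w$ with the correct submodule/quotient positioning; this is the key combinatorial lemma, and it is precisely here that the strong control on $w_b$ given by Theorem~\ref{thm:lengthgraphmap} (that $w_b$ is a proper subword of a rotation of $b$) is essential, since it forces the witnessing subword to fit inside a single band traversal appearing in $w$.
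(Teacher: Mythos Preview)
Your overall strategy coincides with the paper's: show that all but finitely many string modules fail to be bricks by locating a band $b$ inside a long string $w$ and transporting a non-trivial nilpotent endomorphism of $M(b,\lambda,1)$ to one of $M(w)$. The gap lies in the combinatorial core, and it is genuine rather than cosmetic.

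First, your pigeonhole claim is too coarse. A repeated single letter in $w$ does not by itself produce a band: one still needs $b^n$ to be a string for every $n$ and $b$ not to be a proper power. The paper's Lemma~\ref{lem:bandsspecbiser} repeats instead a two-letter socle pattern $\alpha\overline{\beta}$, so that $b=\overline{\beta}v\alpha$ begins with an inverse letter and ends with a direct one; then no maximal direct or inverse subpath of $b^n$ crosses a seam, whence $b^n$ is a string. Crucially, one may also arrange that $\alpha$ does not occur in $v$, so $\alpha$ appears exactly once in $b$ (and hence $b$ is not a proper power). This single-occurrence property is absent from your formulation, yet it is precisely what drives the next step.

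Second, the assertion that $w_b$ appears in two positions of $w$ ``because $w$ contains $\alpha_1\dots\alpha_k\alpha_1$'' is unjustified: $w_b$ is only known to sit inside some rotation of $b$, which need not embed in a window of length $k+1$. What must actually be shown is that both the quotient occurrence and the submodule occurrence of $w_b$ in $^\infty b^\infty$ land inside the finite window that $w$ supplies, with the same flanking letters. The paper deduces this from Proposition~\ref{prop:onlyonce}: every letter of $w_b$ occurs at least twice in $b$, so the unique letter $\alpha$ is not a letter of $w_b$; hence any occurrence of $w_b$ in $^\infty b^\infty$ lies inside a segment $\overline{\beta}v$ between two consecutive $\alpha$'s, and since $\alpha\,\overline{\beta}v\,\alpha$ is a subword of $w$ the flanking letters agree. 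Your sketch cites Proposition~\ref{prop:onlyonce} without exploiting it in this way, and Proposition~\ref{prop:sirectsummand} plays no role in this argument.
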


Recall that an algebra $KQ/I$ is \emph{special biserial} if  every vertex in $Q$ is the start of at most two arrows, and the end of at most two arrows  and if  for every  $\alpha \in Q_1$ there is at most one $\beta \in Q_1$ such that $\alpha \beta \notin I$ and there is at most one $\gamma \in Q_1$ such that $\gamma \alpha \notin I$.

In order to  prove Theorem~\ref{thm:specialbis}, we first show the following lemma. 

\begin{lemma}\label{lem:bandsspecbiser}
Let $A=KQ/I$ be a special biserial algebra. 
Then all but finitely many strings $w$ contain a subword of the form $\alpha\overline{\beta} v \alpha$, for $\alpha, \beta \in Q_1$ and some string $v$,  such that $\overline{\beta}v\alpha$ is a band and $\alpha$ is not a letter of $v$.
\end{lemma}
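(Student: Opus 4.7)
The plan is to establish a quantitative form of the lemma: any string whose length exceeds an explicit bound $N$, depending only on $|Q_1|$ and on the maximal length $L$ of a direct (or inverse) substring, already contains a subword of the required form. Since there are only finitely many strings of length at most $N$ over a finite quiver, this yields the lemma.

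Two uniform ingredients drive the argument. First, because $A$ is finite dimensional, there is an upper bound $L$ on the length of every direct (or inverse) substring of a string. Second, the special biserial hypothesis forces a uniqueness at each peak: if $\alpha \in Q_1$ is followed in some string by an inverse letter, that letter is necessarily $\overline{\beta}$, where $\beta$ is the unique direct arrow distinct from $\alpha$ ending at $t(\alpha)$. Once $|w| > 2L(|Q_1|+1)$, $w$ has strictly more maximal direct substrings than there are elements of $Q_1$, hence strictly more peaks than direct arrows; the pigeonhole principle then produces a direct arrow $\alpha$ that occurs as the terminal letter of two distinct peaks of $w$.

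Now comes the key construction. Let $q^*$ be the earliest position at which $\alpha$ appears as a peak letter of $w$, and let $q^{**}$ be the smallest position strictly greater than $q^*$ at which $\alpha$ appears as a direct letter of $w$; such $q^{**}$ exists because $\alpha$ occurs again as a peak letter later, and in particular as a direct letter. By the uniqueness recorded above, $\alpha_{q^*+1} = \overline{\beta}$, and the subword of $w$ between positions $q^*$ and $q^{**}$ has the form $\alpha \overline{\beta} v \alpha$, where $v = \alpha_{q^*+2}\cdots\alpha_{q^{**}-1}$ contains no direct occurrence of $\alpha$ by construction.

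Finally, one must check that $\overline{\beta} v \alpha$ is a band. Cyclicity and the walk condition $\overline{\beta} \neq \overline{\alpha}$ follow from $(\alpha, \overline{\beta})$ being a peak. Every power $(\overline{\beta} v \alpha)^n$ is a string: the concatenation boundary between two consecutive copies is the direct-to-inverse junction $\alpha\overline{\beta}$, so no direct subpath can cross it, and hence every direct subpath of a power is already a direct subpath of the substring $\alpha\overline{\beta}v\alpha$ of $w$. Primitivity is immediate, since $\alpha$ appears exactly once as a letter of $\overline{\beta} v \alpha$, so any nontrivial factorisation $\overline{\beta} v \alpha = c^k$ would force $k=1$. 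The main obstacle of the whole argument is precisely securing the clause that $\alpha$ is not a letter of $v$: a naive pigeonhole on peak types would still allow $\alpha$ to reappear in the interior of an intermediate direct substring of $v$, but defining $q^{**}$ in terms of the next direct occurrence of $\alpha$ rather than the next peak of type $(\alpha, \overline{\beta})$ avoids this and, as a bonus, delivers the primitivity of $\overline{\beta} v \alpha$ for free.
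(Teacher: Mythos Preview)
Your argument is correct and follows essentially the same pigeonhole strategy as the paper's: the paper bounds strings via the dimension of the socle and pigeonholes over peak types $\alpha\overline{\beta}$ (at most $2|Q_0|$ of them), while you bound via length and pigeonhole over the terminal peak letter $\alpha$ alone---equivalent, since special biseriality forces $\beta$ to be determined by $\alpha$; your explicit choice of $q^{**}$ makes the clause ``$\alpha$ not a letter of $v$'' more transparent than the paper's bare ``we can assume there is no occurrence of $\alpha$ in $v$''. One harmless quibble: the inference ``more than $|Q_1|$ maximal direct substrings, hence more than $|Q_1|$ peaks'' can be off by one when $w$ ends in a direct substring, so your explicit constant $2L(|Q_1|+1)$ may need to be nudged up slightly, but this does not affect the argument.
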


\begin{proof}
Let $w$ be a string, $x$ be a vertex in $Q$ and $\alpha, \beta \in Q_1$ such that $t(\alpha)=t(\beta)=x$.
Since $A$ is special biserial, the simple module $S_x$ associated to the vertex $x$ is a direct summand of the socle $\soc(M(w))$ of the string module $M(w)$  if and only if either $w$ starts with $\overline{\alpha}$ or $\overline{\beta}$, if $w$ finishes with $\alpha$ or $\beta$, or if $\alpha\overline{\beta}$ or $\beta\overline{\alpha}$ is a subword of $w$.

Suppose that the dimension  of  $\soc(M(w))$  is greater or equal to $2n+3$, where $n$ is the number of non-isomorphic simple $A$-modules.
This implies that there are $2n+1$ simple direct summands of $\soc(M(w))$ given by a subword of the form $\gamma\overline{\delta}$ of $w$, for some $\gamma, \delta \in Q_1$.
Then, by the pigeon hole principle, without loss of generality we can suppose that  $\alpha\overline{\beta}$ appears at least twice as a subword of $w$.
Then there exist strings $v, w_1$ and $w_2$ such that  $w=w_1 \alpha\overline{\beta}v\alpha \overline{\beta} w_2$. Furthermore, we can assume that there is no occurrence of $\alpha$ in $v$.

We claim that $\overline{\beta}v\alpha$ is a band in $A$.
First, note that $s(\overline{\beta})=t(\alpha)$.
Moreover, $\overline{\beta}v\alpha$ starts with an inverse letter and finishes with a direct letter, so there are no subwords of any rotation $\overline{\beta}v\alpha$ which are in $I$. 
By construction the letter $\alpha$ appears only once in $\overline{\beta}v\alpha$. 
As a consequence, $\overline{\beta}v\alpha$ is not the concatenation of several copies of a smaller band.
Hence, $\overline{\beta}v\alpha$ is a band. 

Since the quiver has finitely many arrows and $A$ is finite dimensional, there are only finitely many strings $w$ such that $\dim_K(\soc(M(w)))\leq 2n+3$. 
\end{proof}

\begin{proof}[Proof of Theorem~\ref{thm:specialbis}]
Suppose that no band module in the module category of $A$ is a brick. 
Then any brick must be  a string. 
We claim that there are only finitely many string modules which are bricks.

By Lemma \ref{lem:bandsspecbiser} we have that all but finitely many strings $w$ are of the form $w=w_1\alpha \overline{\beta} v \alpha w_2$ where $b=\overline{\beta} v \alpha$ is a band in $A$.
We claim that the string module $M(w)$ of any string $w$ of this form is not a brick. 
By hypothesis, $M(b,\lambda, 1)$ is not a brick.
So, there exists a non-trivial nilpotent endomorphism $f_{w'}: M(b,\lambda, 1) \to M(b,\lambda,1)$ which is realised by a substring $w'$ of $^\infty b ^\infty$ by Remark\;\ref{rmk:n=1}.
Now, by Theorem \ref{thm:lengthgraphmap}  the submodule substring and the quotient substring $w'$ of $b$ are included in some rotation of $b=\overline{\beta}v\alpha$. 
But Proposition \ref{prop:onlyonce} implies that $\alpha$ is not a letter of $w'$ since $\alpha$ only appears once in $b$.
Hence, both copies of $w'$ are subwords of $\overline{\beta}v$.
As a consequence, $w'$ is both a quotient and a submodule substring of $w$.
Thus, there is a non-trivial nilpotent endomorphism $f_{w'}: M(w) \to M(w)$ that is realised by the substring $w'$ of $w$.
In other words, $M(w)$ not a brick.

The converse follows from Proposition~\ref{prop:bandbrick}.
\end{proof}

As a consequence of Theorem \ref{thm:specialbis} we obtain a proof of both $\tau$-Brauer-Thrall conjectures for special biserial algebras. 

\begin{corollary}
Conjectures \ref{conj:tBT1} and \ref{conj:tBT2} hold for special biserial algebras.
\end{corollary}

\begin{proof}
We show Conjecture \ref{conj:tBT1} by contraposition.
Suppose that a special biserial algebra $A$ is $\tau$-tilting infinite.
By Theorem~\ref{thm:specialbis} there exists a band module $M(b, \lambda, 1)$ which is a brick.
Hence there is no subword of $^\infty b ^\infty$ which is both a submodule substring and a quotient substring.
Let $b= \alpha_1 \alpha_2 \dots \alpha_t$ where $\alpha_1$ is a direct letter and $\alpha_t$ is an inverse letter.
Then, for every positive integer $n$, every proper quotient substring of $b^n$ is a quotient substring of $^\infty b ^\infty$.
Likewise, for every positive integer $n$, every proper submodule substring of $b^n$ is a submodule substring of $^\infty b ^\infty$.
Then, it follows from \cite{CB89} that $M(b^n)$ is a brick for all positive integer $n$.
Thus Conjecture~\ref{conj:tBT1} holds for special biserial algebras.

Suppose that $A$ is special biserial $\tau$-tilting infinite.
Again, by Theorem~\ref{thm:specialbis} there exists a band module $M(b, \lambda, 1)$ which is a brick.
Then, for any $\mu \in K^*$, $M(b, \mu, 1)$ is a brick.
Since $K$ is an algebraically closed field, Conjecture~\ref{conj:tBT2} follows.
\end{proof}


\section{Characterisation of $\tau$-tilting finite Brauer Graph Algebras}\label{sec:BGA}

An algebra $A$ is said to be \textit{symmetric} if $A \cong \Hom_A(A,k)$ as $A$-$A$-bimodule. 
It directly follows from the definition of a Brauer graph algebra that it is special biserial and it was shown  in \cite{ Roggenkamp, Schroll15}, see also \cite{Antipov}, that every symmetric special biserial algebra is a  \textit{Brauer graph algebra}.
A \textit{Brauer graph} is a finite undirected connected graph, possibly with multiple edges and loops, in which every vertex is equipped with a cyclic ordering of the edges incident with it and a strictly positive integer, its \emph{multiplicity}. 

 We briefly recall here that given a Brauer graph the corresponding Brauer graph algebra is given by a quotient of a path algebra of a quiver $Q$ by an ideal $I$ generated by relations. The vertices in $Q$ are in bijection with the edges of the Brauer graph and the arrows of $Q$ are induced by the cyclic orderings of the edges at each vertex of the Brauer graph. Each vertex in the Brauer graph gives rise to a cycle in the quiver, sometimes referred to as a special cycle. The relations of the Brauer graph algebra can be read directly from the special cycles and the Brauer graph.  For a precise definition and the  construction of a symmetric special biserial algebra from a Brauer graph we refer the reader to standard textbooks such as \cite{Ben98}.

Before we start, we need to fix some notation.
A \textit{cycle} $C$ in a Brauer graph $G$ is a set of vertices $\{v_1, \dots, v_n\}$ and a set of edges $\{e_1, \dots, e_{n}\}$ such that $e_n$ is incident with $v_1$ and $v_n$, and $e_i$ is incident with $v_{i-1}$ and $v_i$ for all $1\leq i \leq n-1$.
A  cycle $C$ is \textit{minimal} if all its vertices are distinct.
We say that a  cycle $C$ is \textit{odd} (respectively, \textit{even}) if it is a minimal cycle with an odd (respectively, even) number of vertices.

As  an application of Theorem~\ref{thm:specialbis}, we give a new proof of the characterisation in \cite{AAC18} of the  $\tau$-tilting finiteness of Brauer graph algebras in terms of their Brauer graph.  
More precisely, we show the following. 

\begin{theorem}\cite[Theorem~6.7]{AAC18}\label{thm:BGA}
Let $A=KQ/I$ be a Brauer graph algebra with Brauer graph $G$. 
Then $A$ is $\tau$-tilting finite if and only if $G$ has no even cycles and  at most one odd cycle.
\end{theorem}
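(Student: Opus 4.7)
I plan to derive the theorem from Theorem~\ref{thm:specialbis}: since every Brauer graph algebra is special biserial, $A$ is $\tau$-tilting finite if and only if no band module $M(b,\lambda,1)$ is a brick. The task therefore reduces to the equivalence that no band module of $A$ is a brick if and only if $G$ contains no even cycle and at most one odd cycle. Throughout I would work with the standard dictionary between bands in $A$ and closed walks in $G$: each edge of $G$ corresponds to a vertex of the quiver $Q$ of $A$, and the arrows of $Q$ traversed when the walk passes through a Brauer vertex $v$ form a subpath of the special cycle at $v$, giving direct letters when the subpath respects the cyclic order at $v$ and inverse letters otherwise.

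For the reverse implication I would prove the contrapositive: whenever $G$ has either an even cycle or two distinct odd cycles, I construct a band $b$ such that $M(b,\lambda,1)$ is a brick. Given an even cycle $C = v_1 e_1 v_2 \cdots e_{2k} v_1$, traverse $C$ so that the direction of traversal alternates at consecutive Brauer vertices; the even parity of $C$ ensures the walk closes up into a band $b_C$ visiting each edge of $C$ exactly once, so no letter of $b_C$ is repeated. Proposition~\ref{prop:onlyonce} then precludes any non-trivial nilpotent endomorphism of $M(b_C,\lambda,1)$, and by Remark~\ref{rmk:n=1} we conclude $\End_A(M(b_C,\lambda,1)) = K$. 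If $G$ has two odd cycles $C_1, C_2$, take a shortest path $P$ connecting them (possibly of length zero if they share a vertex) and concatenate: traverse $C_1$ with alternating directions, then $P$, then $C_2$ with alternating directions, then $P$ in reverse. The odd parities of $C_1$ and $C_2$ compensate, each edge of $C_1$ and $C_2$ is visited once, and the back-and-forth along $P$ uses complementary arrows at each internal Brauer vertex, so no letter is repeated; the same argument yields a brick.

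For the forward implication, assume $G$ has no even cycle and at most one odd cycle $C_0$, so that $G$ is either a tree or a tree with a single attached odd cycle. I need to show every band $b$ of $A$ yields a non-brick $M(b,\lambda,1)$. If $G$ is a tree then no bands exist; otherwise any band walk must traverse $C_0$, and since $C_0$ has odd length, the walk must traverse $C_0$ an even number of times in order to close up with the correct alternation of direct and inverse segments. Tree edges touched by the walk are likewise traversed an even number of times. Consequently $b$ has a genuine periodicity which I would exploit to exhibit a substring $w$ of $^\infty b^\infty$ that is simultaneously a submodule substring and a quotient substring of $^\infty b^\infty$, producing a non-trivial $f_w \in \End_A(M(b,\lambda,1))$ and showing $M(b,\lambda,1)$ is not a brick.

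The main obstacle is this last step: Proposition~\ref{prop:onlyonce} supplies necessity (repeated letters in $b$) but not sufficiency for the existence of a valid $w$, so $w$ must be produced explicitly. The crucial combinatorial input is that each Brauer vertex on the odd cycle $C_0$ is visited twice by the band walk, and these two visits must occur in opposite traversal directions, as a consequence of the odd parity of $C_0$. This opposite-direction structure is exactly what supplies the boundary letters (one direct and one inverse on either side) needed for the candidate $w$ to qualify as both a submodule substring and a quotient substring of $^\infty b^\infty$.
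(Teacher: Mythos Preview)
Your overall strategy---reducing to Theorem~\ref{thm:specialbis} and then analysing bands---matches the paper, and your construction of brick bands from an even cycle or two odd cycles is essentially the paper's Lemmas~\ref{lem:even} and~\ref{lem:twooddsmakeaneven}. One small point there: Proposition~\ref{prop:onlyonce} only tells you that any substring $w$ inducing a nilpotent endomorphism has length zero, i.e.\ $w$ is a vertex; you still need to rule out a simple appearing in both $\top M(b,\lambda,1)$ and $\soc M(b,\lambda,1)$. The paper handles this via Proposition~\ref{prop:toptosoc} together with an explicit check that the top and socle are disjoint for the constructed band.

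The genuine gap is in the forward implication. Your claim that ``if $G$ is a tree then no bands exist'' is false: Brauer graphs carry multiplicities at their vertices, and a tree with two or more vertices of multiplicity greater than one admits bands (the special cycle at such a vertex can be traversed several times within a string). For the same reason, in the one-odd-cycle case a band need not touch $C_0$ at all---it may live entirely in a tree branch supported by high-multiplicity vertices. The paper deals with both situations by observing that any band not of the ``cycle-traversed-twice'' shape must contain a maximal direct (or inverse) subpath $w$ that begins and ends at the same edge $x$ incident with a vertex of multiplicity $>1$; maximality then forces $S(x)$ into both $\top M(b,\lambda,1)$ and $\soc M(b,\lambda,1)$, yielding a non-trivial nilpotent endomorphism. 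Your argument needs this multiplicity analysis; without it the case split is incomplete and the ``genuine periodicity'' you invoke does not cover all bands.
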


In order to show Theorem \ref{thm:BGA}, we first show the following lemmas.

\begin{lemma}\label{lem:even}
Let $A=KQ/I$ be a Brauer graph algebra with Brauer graph $G$. 
If  $G$ has an even cycle, then there is a band $b$ such that $M(b,\lambda, 1)$ is a brick.
\end{lemma}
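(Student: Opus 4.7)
The plan is to construct a band $b$ directly from the even cycle and then deduce that $M(b,\lambda,1)$ is a brick by combining Theorem~\ref{thm:lengthgraphmap} with Proposition~\ref{prop:onlyonce}.

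Write the even cycle $C$ as $v_1, e_1, v_2, e_2, \ldots, v_{2n}, e_{2n}, v_1$, with the $v_i$ pairwise distinct (hence so are the $e_i$). At each $v_i$, the cyclic ordering determines, inside the special cycle at $v_i$ in $Q$, a unique direct path $p_i$ from $e_{i-1}$ to $e_i$ and a unique direct path $q_i$ from $e_i$ to $e_{i-1}$; these two paths use disjoint sets of arrows. Set
\[
b \;=\; p_1\,\overline{q_2}\,p_3\,\overline{q_4}\,\cdots\,p_{2n-1}\,\overline{q_{2n}},
\]
so that $b$ alternates between direct and inverse pieces at consecutive vertices of $C$. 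The parity hypothesis on $C$ is essential: only when the length is even can this alternation close up consistently. The verification that $b$ is a band is where most of the routine work lies. Sources and targets match up correctly at every junction. No subword $\alpha\bar{\alpha}$ appears at a transition between pieces, since consecutive pieces sit at the distinct Brauer graph vertices $v_i$ and $v_{i+1}$. Each individual piece traverses only a proper portion of a single special cycle, so no power relation of the Brauer graph algebra is violated by $b$ or by any of its powers. Finally, $b$ is not a proper power of a shorter walk because arrows from different pieces live at different Brauer graph vertices.

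The key combinatorial observation is that $b$ has no repeated letter. Indeed, arrows appearing in distinct pieces $p_i, q_j$ lie at distinct Brauer graph vertices, arrows within a single $p_i$ or $q_i$ are consecutive arrows of a special cycle and hence distinct, and $p_i$ and $q_i$ themselves use disjoint sets of arrows. A direct reading of the walk $b$ then shows that its peaks occur exactly at the even-indexed edges $e_0 = e_{2n}, e_2, \ldots, e_{2n-2}$ while its valleys occur at the odd-indexed ones $e_1, e_3, \ldots, e_{2n-1}$. Since the $e_i$ are pairwise distinct, the supports of $\top M(b,\lambda,1)$ and $\soc M(b,\lambda,1)$ are disjoint subsets of $Q_0$.

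The brick property now follows quickly. Suppose for contradiction that $M(b,\lambda,1)$ admitted a non-trivial nilpotent endomorphism. By Remark~\ref{rmk:n=1} such an endomorphism is a linear combination of maps $f_w$ coming from finite subwords $w$ of ${}^\infty b^\infty$ that are simultaneously submodule and quotient substrings. Theorem~\ref{thm:lengthgraphmap} then forces $w$ to be a proper subword of some rotation of $b$, and Proposition~\ref{prop:onlyonce} forces every letter of $w$ to appear at least twice in $b$. Combined with the no-repeated-letter property above, this collapses $w$ to a single vertex of $Q$. But such a $w$ would contribute simultaneously to the top and socle of $M(b,\lambda,1)$, contradicting the disjointness established in the previous paragraph. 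Hence no non-trivial nilpotent endomorphism exists, $\End_A(M(b,\lambda,1)) = K$, and $M(b,\lambda,1)$ is a brick. The main obstacle in this argument is the band verification; once that is in place, the brickness falls out cleanly from the endomorphism results of Section~\ref{sec:bands}.
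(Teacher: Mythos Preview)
Your proof is correct and follows essentially the same route as the paper: construct the band from the even cycle by alternating direct and inverse pieces around it, observe that the resulting band has no repeated letters and that its top and socle are supported on disjoint sets of simples, and conclude brickness. The only cosmetic difference is that where the paper invokes Proposition~\ref{prop:toptosoc} directly, you inline its proof by appealing to Theorem~\ref{thm:lengthgraphmap} and Proposition~\ref{prop:onlyonce}; also, your introduction of both $p_i$ and $q_i$ (and the remark that they use disjoint arrow sets) is harmless but unnecessary, since your band uses only one of the two at each vertex $v_i$.
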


\begin{proof}
By hypothesis $G$ has  an even cycle $C$ with pairwise distinct vertices  $v_1, \dots, v_{2t}$ and edges $e_1, \dots, e_{2t}$ in $G$ such that $e_i$ is incident with $v_i$ and $v_{i+1}$ for all $i$ and $e_{2t}$ is incident with $v_{2t}$ and $v_1$.
Now, define $w_i$ to be the shortest direct path in $Q$ from $e_i$ to $e_{i+1}$ if $i$ is even and the shortest inverse path from $e_{i}$ to $e_{i+1}$ if $i$ is odd. 
Then it is easy to see that the word $w = w_1w_2 \dots w_{2t}$ is a band.

We claim that $M(w,\lambda,1)$ is a brick.
Indeed, by construction  $w$ has no repeated letters. 
Then, Proposition \ref{prop:toptosoc} implies that every non-trivial nilpotent endomorphism $f$ of $M(w,\lambda,1)$ factors through a map $\overline{f}: top(M(w,\lambda,1)) \to soc(M(w,\lambda,1))$.
By construction, we have that 
$$top(M(w,\lambda,1)) \cong \bigoplus_{j=1}^t S(e_{2j}) \quad \text{ and } \quad soc(M(w,\lambda,1)) \cong \bigoplus_{j=1}^t S(e_{2j-1})$$ where all the $S(e_i)$ are distinct.  
Then $M(w,\lambda,1)$ has no non-trivial nilpotent endomorphisms and thus it is a brick. 
\end{proof}

\begin{remark}
If a Brauer graph $G$ is not simply laced then it has a cycle of length 2 and it follows from the previous lemma that it contains a band module which is a brick. 
\end{remark}

\begin{lemma}\label{lem:twooddsmakeaneven}
Let $A=KQ/I$ be a Brauer graph algebra with Brauer graph $G$. 
If $G$ has two odd cycles, then there is a band $b$ such that $M(b,\lambda, 1)$ is a brick.
\end{lemma}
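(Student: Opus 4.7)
The plan is to adapt the construction of Lemma~\ref{lem:even}, building a closed walk of even length in $G$ that traverses both odd cycles $C_1$ and $C_2$. First I reduce to the edge-disjoint case: if $C_1$ and $C_2$ share an edge, then $G$ contains a minimal even cycle (for instance, when the overlap is a single path, the symmetric difference $C_1 \triangle C_2$ is a minimal even cycle), and Lemma~\ref{lem:even} produces the brick band. A Brauer graph with multi-edges reduces similarly, since two parallel edges already form a $2$-cycle. Hence I may assume $G$ is simple and $C_1, C_2$ are edge-disjoint.

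Write $|C_1| = 2s+1$ and $|C_2| = 2r+1$. I form a closed walk $W$ in $G$ as follows. If $C_1$ and $C_2$ share a vertex $v$, let $W$ go once around $C_1$ starting and ending at $v$, then once around $C_2$. Otherwise, pick a shortest path $P = p_0 h_1 p_1 \cdots h_k p_k$ in $G$ from some $p_0 \in C_1$ to some $p_k \in C_2$, and let $W = C_1 \cdot P \cdot C_2 \cdot P^{-1}$. In both cases the number of edges of $W$ is $(2s+1)+(2r+1)+2k$ (with $k = 0$ in the first case), which is even; write it as $2N$ and label the consecutive edges of $W$ as $g_1, g_2, \ldots, g_{2N}$. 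Define $b = w_1 w_2 \cdots w_{2N}$ by letting $w_i$ be the direct path in $Q$ from the quiver vertex $g_i$ to $g_{i+1}$, via the special cycle at their common Brauer vertex, if $i$ is even, and the inverse of such a path if $i$ is odd. Just as in the proof of Lemma~\ref{lem:even}, this makes $b$ a band.

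The key claim is that $b$ has no repeated letters, so that Proposition~\ref{prop:toptosoc} applies. The Brauer vertices visited more than once in $W$ are the shared vertex $v$ in the first case, or $p_0, p_k$, and the interior vertices $p_1, \ldots, p_{k-1}$ of $P$ in the second case. For each such vertex, a direct position count shows that its two occurrences as junctions in $W$ are separated by an odd number of steps, so the two corresponding $w_i$'s have opposite direct/inverse parities. Consequently, the arrows of the special cycle at that vertex appear in $b$ at most once as a direct letter (in the direct $w_i$) and at most once as an inverse letter (in the inverse $w_j$); since a direct arrow and its formal inverse are distinct elements of $Q_1 \cup \overline{Q_1}$, no letter repeats in $b$.

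By Proposition~\ref{prop:toptosoc}, every non-trivial nilpotent endomorphism of $M(b,\lambda,1)$ factors through a homomorphism $\top (M(b,\lambda,1)) \to \soc (M(b,\lambda,1))$, where the top and socle are the direct sums of $S(g_{2j-1})$ and $S(g_{2j})$ respectively for $j = 1, \ldots, N$. A parallel parity computation shows that each edge of $G$ appears in $W$ at positions of a single parity—in particular, the two occurrences of each edge of $P$ have positions of the same parity—so each edge contributes a simple to exactly one of the top and socle. Since $C_1$, $C_2$, and the edges of $P$ are pairwise disjoint by construction, the top and socle are supported on disjoint sets of simples, and any such homomorphism must vanish. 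Therefore $M(b,\lambda,1)$ is a brick. The main obstacle is the parity bookkeeping: establishing simultaneously that two occurrences of a revisited Brauer vertex have opposite parities (to ensure no repeated letters) while two occurrences of an edge of $P$ have the same parity (to keep top and socle disjoint).
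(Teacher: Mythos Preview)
Your approach is essentially the paper's own: build a band that goes once around $C_1$, along a connecting walk, once around $C_2$, and back, then invoke Proposition~\ref{prop:toptosoc} and show that $\top$ and $\soc$ have disjoint simple summands. Your parity bookkeeping is in fact more explicit than the paper's, which simply asserts that the constructed word has no repeated letters.

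There is, however, a genuine gap in your reduction. You reduce to $C_1$ and $C_2$ being edge-disjoint, but not to their sharing at most one vertex. If $C_1$ and $C_2$ are edge-disjoint yet share two or more vertices, then your list of ``Brauer vertices visited more than once in $W$'' (namely $v$, or $p_0,\dots,p_k$) is incomplete: an extra shared vertex $v'$ lies at some junction $i$ inside the $C_1$-portion of $W$ and some junction $i'$ inside the $C_2$-portion, and nothing forces $i$ and $i'$ to have opposite parities. When they have the same parity, both $w_i$ and $w_{i'}$ are direct (say); since the four incident edges $e_i,e_{i+1},e'_{j},e'_{j+1}$ at $v'$ may interleave in the cyclic order, the two direct subpaths of the special cycle at $v'$ can overlap in an arrow, producing a repeated letter. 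Then Proposition~\ref{prop:toptosoc} no longer applies. Your first reduction (shared edge $\Rightarrow$ even cycle) has a similar issue: you only justify it ``for instance, when the overlap is a single path''.

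Both gaps are closed by one further reduction: if two distinct minimal cycles share at least two vertices, choose shared vertices $v,v'$ that are consecutive along $C_1$; the arc of $C_1$ between them, together with the two arcs of $C_2$ between them, gives three internally disjoint $v$--$v'$ paths, i.e.\ a theta subgraph. Among its three cycle lengths $a+b,\,a+c,\,b+c$, at least one is even, so $G$ contains a minimal even cycle and Lemma~\ref{lem:even} finishes. After this reduction $C_1$ and $C_2$ meet in at most one vertex, and then your parity argument does account for every repeated Brauer vertex and every repeated edge, and the rest of your proof goes through. (Minor point: with your conventions the simples $S(g_{2j})$ lie in the top and $S(g_{2j-1})$ in the socle, not the other way round, but this does not affect the argument.)
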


\begin{proof}
Suppose that $G$ has two odd cycles. 
Then there exist two sets of vertices, $v_1, \dots, v_{2t+1}$ and $v'_1, \dots, v'_{2r+1}$, and two set of edges $e_1, \dots, e_{2t+1}$ and $e'_1, \dots, e'_{2r+1}$ in $G$ such that $e_i$ is incident with $v_i$ and $v_{i+1}$ for all $i$ and $e'_j$ is incident with $v'_j$ and $v'_{j+1}$, $e_{2t+1}$ is incident with $v_{2t+1}$ and $v_1$ and $e'_{2r+1}$ is incident with $v'_{2r+1}$ and $v'_1$.
Since $G$ is connected, there exists a set of vertices $v''_1, \dots, v''_s$ and edges $e''_1, \dots, e''_{s+1}$ such that $e''_1$ is incident with $v_1$ and $v''_1$, $e''_{s+1}$ is incident with $v''_s$ and $v'_1$ and $e''_k$ is incident with $v''_{k-1}$ and $v''_{k}$ for all $2\leq k \leq s$.
Note that if $v_1=v'_1$, then $k=0$.
This proof consist of two cases: when $k$ is odd and when $k$ is even.
We prove the case of $k$ odd, the case of $k$ even being very similar.

Similarly to the proof of Lemma \ref{lem:even}, we  construct a suitable band in $Q$ and we will do so in several steps. 

First, define $w_i$ to be the shortest direct path from $e_i$ to $e_{i+1}$ if $i$ is even and the shortest inverse path from $e_{i}$ to $e_{i+1}$ if $i$ is odd for all $1 \leq i \leq 2t$.
Now let $w_{2t+1}$  be the shortest direct path from $e_{2t+1}$ to $e''_1$.
Denote by $u_{i}$ the shortest direct path from $e''_{i}$ to $e''_{i+1}$ if $i$ is odd and the shortest inverse path from $e''_{i}$ to $e''_{i+1}$ if $i$ is even for all $i$ between $0$ and $s-1$.
Define $w'_{0}$ as the shortest direct path from $e''_{s+1}$ to $e'_1$.
For all $1 \leq j \leq 2r$ define $w'_{j}$ to be the shortest direct path from $e'_j$ to $e'_{j+1}$ if $j$ is even and the shortest inverse path from $e'_{j}$ to $e'_{j+1}$ if $j$ is odd.
Set $w'_{2r+1}$ as the shortest inverse path from $e'_{2r+1}$ to $e''_{s+1}$.
Finally let $w_0$ be the shortest direct path from $e''_1$ to $e_1$.

By construction $w=w_1 \ldots w_{2t+1} u_0 \ldots u_{s-1} w'_0 \ldots w'_{2r+1} \overline{u_{s-1}} \ldots \overline{u'_0} w_0$ is a band  
and $w$ has no repeated letters.
Then, Proposition \ref{prop:toptosoc} implies that every non-trivial nilpotent endomorphism $f$ of $M(w,\lambda,1)$ factors through a map $\overline{f}: \top(M(w,\lambda,1)) \to \soc(M(w,\lambda,1))$.
Furthermore, by construction, we have that $\top(M(w,\lambda,1))$ and $\soc(M(w,\lambda,1))$ have no-common direct summand, thus implying that $M(w,\lambda,1)$ has no non-trivial nilpotent endomorphisms. 
In other words, $M(w,\lambda,1)$ is a brick in $\mod A$.
\end{proof}

We now prove Theorem \ref{thm:BGA}.

\begin{proof}[Proof of Theorem \ref{thm:BGA}]
Let $A$ be a Brauer Graph algebra with Brauer graph $G$.  Recall that every indecomposable non-projective $A$-module  $M$ comes from a string or a band in the Brauer graph. Furthermore, 
if $G$ contains an even cycle or if $G$ contains two odd cycles then by Lemmas~\ref{lem:even} and \ref{lem:twooddsmakeaneven} the algebra  $A$ is $\tau$-tilting infinite. Thus suppose that $G$ contains at most one odd cycle. 

If $G$ is a tree and all but at most one multiplicity is equal to one then $A$ is of finite representation type and,
in particular, $A$ is $\tau$-tilting finite.

Now suppose that $G$ is a tree and that there are at least two vertices of $G$ with multiplicity strictly greater than one and let $b$ be a band in $A$.
Given that $G$ is a tree, there exists a vertex $v$ in $G$ with multiplicity strictly greater than one such that $b= w b'$, where $w$ is a direct or inverse path maximal in $b$ starting and ending at the same  edge $x$ of $G$ which is incident with $v$. 
By maximality of $w$, we have that the simple module $S(x)$ associated to $x$ is a direct summand of both $\soc(M(b,\lambda, 1))$ and $\top(M(b,\lambda, 1))$, for any $\lambda \in K^*$.
Hence no band module in $\mod A$ is a brick. 
So, $A$ is $\tau$-tilting finite by Theorem \ref{thm:specialbis}.

The last case to consider is when $G$ has exactly one cycle of odd length $2t+1$. 
Then there exists a set of vertices $v_1, \dots, v_{2t+1}$ and edges $e_1, \dots, e_{2t+1}$ in $G$ such that $e_i$ is incident with $v_i$ and $v_{i+1}$ for all $i$ and $e_{2t+1}$ is incident with $v_{2t+1}$ and $v_1$.
Since $G$ has no even cycle, it is simply-laced and $e_i$ is the unique edge incident with $v_i$ and  $v_{i+1}$. 

Now consider a band $b$ in $A$. 
If $b$ is such that there exists a vertex $v$ such that $b = w b'$ as in the case of the tree with at least two vertices of of higher multiplicities, then $M(b,\lambda, 1)$ is not a brick.

Otherwise $b$ is of the form  $b= w_1 \dots w_{2t+1} u_1 \dots u_{2t+1}$, where $w_i$ is a direct path from $e_i$ to $e_{i+1}$ if $i$ is even and the inverse path from $e_{i}$ to $e_{i+1}$ if $i$ is odd for all $1 \leq i \leq 2t+1$ and $u_{i}$ is a the inverse path from $e_i$ to $e_{i+1}$ if $i$ is even and the direct path from $e_{i}$ to $e_{i+1}$ if $i$ is odd for all $1 \leq i \leq 2t+1$.
Then the simple module $S(e_i)$ is a direct summand of both $\top(M(w,\lambda, 1))$ and $\soc(M(w,\lambda, 1))$.
So, $M(w,\lambda, 1)$ is not a brick and by the same argument as above, $A$ is $\tau$-tilting finite.
\end{proof}

\end{document}